\newcommand{\ZZ}{\mathbf{Z}}
\newcommand{\CC}{\mathbf{C}}
\newcommand{\QQ}{\mathbf{Q}}
\newcommand{\RR}{\mathbf{R}}
\newcommand{\DD}{\mathbf{D}}
\newcommand{\JJ}{\mathcal{J}}
\newcommand{\OO}{\mathcal{O}}
\newcommand{\al}{\alpha}
\DeclareMathOperator{\interior}{int}
\newcommand{\mfa}{\mathfrak{a}}
\newcommand{\mfab}{\mfa_\bullet}
\newcommand{\ep}{\epsilon}
\newcommand{\el}{\ell}
\newcommand{\qa}{\quad}
\newcommand{\vp}{\varphi}
\newcommand{\RRn}{\RR^n_{\ge 0}}
\newcommand{\EE}{\mathcal{E}}
\newcommand{\noi}{\noindent}
\providecommand{\abs}[1]{ |#1|}
\theoremstyle{plain}
\newtheorem{theorem}{Theorem}[section]
\newtheorem{lemma}[theorem]{Lemma}
\newtheorem{corollary}[theorem]{Corollary}
\newtheorem{proposition}[theorem]{Proposition}
\newtheorem{definition}[theorem]{Definition}
\newtheorem{question}[theorem]{Question}
 \newtheorem{example}[theorem]{\textnormal{\textbf{Example}}}
\newtheorem{Standard Process}[theorem]{Procedures}
\theoremstyle{remark}
\newtheorem{remark1}[theorem]{Remark}
\DeclareMathOperator{\Jump}{Jump}
\DeclareMathOperator{\jump}{Jump}
\begin{document}

\title[Jumping numbers of analytic multiplier ideals]{ Jumping numbers of analytic multiplier ideals \\ \small (with an appendix by S\'ebastien Boucksom)}

\author{ Dano Kim and Hoseob Seo }

\date{}

\maketitle

\begin{abstract}

\noindent  We extend the study of jumping numbers of multiplier ideals due to Ein-Lazarsfeld-Smith-Varolin from the algebraic case to the case of general plurisubharmonic functions. While  many properties from Ein-Lazarsfeld-Smith-Varolin are shown to generalize to the plurisubharmonic case, important properties such as periodicity and discreteness do not hold any more. Previously only two particular examples with a cluster point (i.e. failure of discreteness) of jumping numbers were known, due to Guan-Li and to Ein-Lazarsfeld-Smith-Varolin respectively.  We generalize them to all toric plurisubharmonic functions in dimension 2 by characterizing precisely when cluster points of jumping numbers exist and  by computing all those cluster points.  This characterization suggests that clustering of jumping numbers is a rather frequent phenomenon.  In particular, we obtain uncountably many new such examples.

\end{abstract}

\tableofcontents

\section{Introduction}

 Multiplier ideals are important tools in complex algebraic geometry (see e.g. \cite{L}, \cite{D11}, \cite{S98}). In \cite{ELSV}, Ein, Lazarsfeld, Smith and Varolin systematically studied jumping numbers of multiplier ideals in the algebraic case, i.e. for multiplier ideals $\JJ(cD)$ associated to an effective divisor $D$ or to an ideal sheaf on a smooth complex variety. A jumping number is a coefficient $c > 0$ at which the ideal $\JJ(cD)$ jumps down to a smaller ideal as the coefficient increases.

 Jumping numbers are fundamental invariants of the given singularity : they encode interesting geometric, algebraic and analytic information. When one considers all the nonreduced subschemes defined by $\JJ(cD)$ as $c$ varies, properties of jumping numbers are important as in recent work of Demailly~\cite{D15} on $L^2$ extension theorems from nonreduced subschemes. Also see the introduction and the references in \cite{ELSV} for earlier works on jumping numbers and several different connections and contexts in mathematics where the jumping numbers naturally arise.

    The full strength of the theory of multiplier ideals lies in that one can even take multiplier ideals from a \emph{plurisubharmonic} function, which can be regarded as a limit of divisors and ideals.  In \cite{ELSV},  the study of jumping numbers could not be extended to general plurisubharmonic functions since the openness conjecture was not yet proved at the time.

 Now that the openness conjecture is a theorem (see (\ref{openness})) of Guan and Zhou \cite{GZ} (after related works of several authors including \cite{FJ05} in dimension $2$ : see the references in \cite{GZ}), one can study the jumping numbers associated to general plurisubharmonic functions: general in the sense that it is not necessarily having the above mentioned algebraic singularities.

 More precisely, let $X$ be a complex manifold and let $\vp$ be a plurisubharmonic (\emph{psh}, for short) function on $X$. Let $x \in X$ be a point. From the openness theorem (\ref{openness}) of \cite{GZ}, it follows that (the stalks at $x$ of) the multiplier ideal sheaves $\JJ(c \vp)_x$ are \emph{constant precisely on} half-open intervals $[\chi, \chi' )$ in the sense that $\JJ(c \vp)_x = \JJ(\chi \vp)_x$ for $ c \in [\chi , \chi')$ and $ \JJ(\chi' \vp)_x \subsetneq \JJ(\chi \vp)_x .$  The real numbers such as $\chi$ and  $\chi'$ are called the jumping numbers of $\vp$ at $x$ since they are the points where the multiplier ideals ``jump".  This is parallel to the algebraic case as in \cite{ELSV}.

 However, the singularity of a general psh function can be highly complicated and difficult to study compared to the algebraic case, one reason being that one cannot apply a finite number of blow-ups  to resolve its singularity. Plurisubharmonic singularity is the topic of intensive current research (see e.g. \cite{D93}, \cite{D11}, \cite{DH}, \cite{DGZ}, \cite{R13a}, \cite{BFJ}). 
 
  One often expects peculiar behaviors coming from psh singularities which are not seen in the algebraic case.  Indeed for jumping numbers, recently Guan and Li \cite{GL} showed by an example (see \eqref{glpsh}) of a psh function  that the set of all the jumping numbers   can have a cluster point (i.e. limit point) in the psh case.  This is a new phenomenon in the general psh case.

 In this paper, along with giving many properties and examples of jumping numbers in the psh case, we first give another psh example (\ref{5.10}) having a cluster point of jumping numbers which is converted from an example of a graded system of ideals in \cite{ELSV}. For this conversion, we define a psh function associated to the graded system of ideals and use an important result Theorem~\ref{asymptotic} of S. Boucksom whose proof is given in the appendix.  These two examples \eqref{glpsh} and (\ref{5.10}) happen to be toric psh in dimension $2$.

 In view of the example (\ref{5.10}) where all positive integers are cluster points of the jumping numbers, it is natural to raise the following

\begin{question}\label{clusters}

 Let $\vp$ be a psh function on a complex manifold $X$ and let $x \in X$ be a point. Let $T$ be the set of cluster points in the set of jumping numbers $\jump(\vp)_x \subset \RR_{+}$. Assume that $T$ is nonempty. Is $T$ infinite?  Is $T$ unbounded?

\end{question}

 We answer this question positively when $\vp$ is toric psh: in that case, $T$ is indeed an infinite and unbounded set of positive real numbers by Corollary~\ref{mcmc}.

  On the other hand, we completely generalize the above two particular examples (which were all that were known) \cite{ELSV} (\ref{5.10}) and \cite{GL} \eqref{glpsh}  as follows.

\begin{theorem}[see   Theorem~\ref{cluster_asymptote}  for the full precise statement]\label{dimtwo}

Let $\vp$ be a toric psh function on the unit polydisk $\DD^2 \subset \CC^2$. Let $P(\vp)$ be the Newton convex body of $\vp$.  Let $\jump(\vp)_0$ be the set of jumping numbers of $\vp$ at the origin $0 \in \DD^2$. We have a complete characterization in terms of $P(\vp)$ for

(1)  when there exists a cluster point of jumping numbers in $\jump(\vp)_0$, and

(2) what are those cluster points.

\noi In particular, we obtain uncountably many new examples of psh functions having a cluster point of jumping numbers.

\end{theorem}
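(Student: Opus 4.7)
The plan is to translate the problem into combinatorics of the Newton convex body $P=P(\vp)\subset \RR^2_{\ge 0}$ and then do a direct asymptotic analysis of $\partial P$ near the coordinate axes. As a preliminary step I would record the toric formula for multiplier ideals: since $\vp$ is toric psh, $\JJ(c\vp)_0$ is monomial and, by the toric analogue of Howald's formula together with the openness theorem \ref{openness},
\[
  z^\alpha\in \JJ(c\vp)_0 \iff \alpha+\mathbf{1}\in \interior(cP).
\]
Writing $\lambda_P(w):=\sup\{t>0:w\in tP\}$ for the gauge of $P$, this gives the clean description
\[
  \jump(\vp)_0=\{\lambda_P(m,n):(m,n)\in \ZZ^2_{\ge 1}\},
\]
so the entire question reduces to an asymptotic study of $\lambda_P$ on the integer points $(m,n)$ with $m,n\ge 1$.

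Since $\lambda_P$ is positively homogeneous of degree $1$, any cluster point $T$ of $\jump(\vp)_0$ must come from a sequence $(m_i,n_i)\in \ZZ^2_{\ge 1}$ with $|(m_i,n_i)|\to\infty$ and $\lambda_P((m_i,n_i))\to T$. Homogeneity forces $\lambda_P\bigl((m_i,n_i)/|(m_i,n_i)|\bigr)\to 0$, and since $P$ is a convex upper subset of $\RR^2_{\ge 0}$ with nonempty interior, the only unit directions in $\RR^2_{\ge 0}$ on which $\lambda_P$ vanishes are the two coordinate directions $(1,0)$ and $(0,1)$. Only two asymptotic regimes therefore need to be analyzed.

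The key step is the direction-by-direction computation. Parametrizing $\partial P$ as the graph of a convex non-increasing function $y=f(x)$ with $\beta^{*}:=\lim_{x\to\infty}f(x)$, and symmetrically as $x=g(y)$ with $\alpha^{*}:=\lim_{y\to\infty}g(y)$, the identity $\lambda_P((m,n))=T\Leftrightarrow n/T=f(m/T)$ lets me compute the limits. For a sequence with limiting direction $(1,0)$ the integer $n_i$ stays bounded and is hence eventually equal to a constant $n\ge 1$; sending $m\to\infty$ forces $n/T_i=f(m/T_i)\to\beta^{*}$, so $T_i\to n/\beta^{*}$, and this is a genuine cluster point precisely when $\beta^{*}>0$ and $f>\beta^{*}$ on its whole domain, i.e.\ $\partial P$ contains no horizontal ray at height $\beta^{*}$. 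The symmetric analysis at $(0,1)$ produces cluster points $m/\alpha^{*}$ for $m\ge 1$ under the analogous strict asymptote condition on $g$. Combining yields the complete characterization
\[
  \text{cluster points of } \jump(\vp)_0
  =\{m/\alpha^{*}:m\in \ZZ_{\ge 1}\}\cup \{n/\beta^{*}:n\in \ZZ_{\ge 1}\},
\]
each summand being included only under the corresponding strict asymptote condition; uncountably many new examples arise by freely prescribing the interior shape of $\partial P$ subject to one of these conditions (e.g.\ by varying $\beta^{*}$ continuously), and Corollary~\ref{mcmc} on the infinitude and unboundedness of the cluster set is then immediate.

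The main obstacle is the distinction between a \emph{genuine} asymptote and a horizontal or vertical boundary ray: na\"ively one would predict $n/\beta^{*}$ to be a cluster point whenever $\beta^{*}>0$, but the L-shape $P=(1,1)+\RR^2_{\ge 0}$, for which $\beta^{*}=1$ is realized by a horizontal ray of $\partial P$, has $\jump(\vp)_0=\ZZ_{\ge 1}$ with no cluster points at all. Ruling this out cleanly requires verifying that $\lambda_P((m,n))\to n/\beta^{*}$ strictly monotonically, which follows from the subdifferential behavior of the convex function $f$ near infinity; once that strict monotonicity is in hand, the rest of the argument is convex-analytic bookkeeping about $P$.
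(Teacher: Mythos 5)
Your argument is correct and follows essentially the same route as the paper: reduce to the Newton convex body via the Howald-type formula (Propositions~\ref{multi} and \ref{jump_newton}), then show that clustering can only occur along lattice sequences escaping to infinity in a coordinate direction, where the dichotomy between a boundary ray of $P(\vp)$ and a genuine asymptote disjoint from $P(\vp)$ decides whether the values $\lambda_P(m,n)$ are eventually constant or strictly increase to $n/y_0$ (resp.\ $m/x_0$). Your gauge-function packaging is just a clean reformulation of the paper's subsequence extraction in the proof of Theorem~\ref{cluster_asymptote}, and your L-shaped counterexample correctly isolates the one subtle point; note only that $\jump(\vp)_0$ is the \emph{closure} of the set $\{\lambda_P(m,n)\}$ rather than that set itself, which does not affect the cluster-point analysis.
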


 This characterization (which is in the statement of Theorem~\ref{cluster_asymptote}) is explained very well by Figure 2 in \S 5.  One can then easily produce so many examples of psh functions with cluster points of jumping numbers, simply by taking Newton convex bodies with the behavior (d) or (h) of Figure 2 and then taking corresponding psh functions as in e.g. \eqref{siu2} in Example~\ref{graded}. This certainly suggests that clustering of jumping numbers is a rather frequent phenomenon in the singularity of psh functions.

 It is worth noting here that for the new examples in Theorem~\ref{dimtwo}, while we can  compute the cluster points of jumping numbers, it will be more difficult or impossible to compute all the jumping numbers themselves. This is unlike the particular case of (\ref{5.10}) where \cite{ELSV} computed all the jumping numbers first and then determined the cluster points from them.

\begin{remark1}
 The new examples in Theorem~\ref{dimtwo} also yield  new algebraic examples in the above graded system of ideals version of \cite{ELSV} since we can take the graded system of monomial ideals from the Newton convex bodies as in Example~\ref{graded}.
\end{remark1}

  Theorem~\ref{dimtwo} also provides further information on the above example \eqref{glpsh} of Guan and Li~\cite{GL} where $1$ was shown to be a cluster point of jumping numbers : now we show that all the cluster points are precisely the positive integers in Corollary~\ref{gl1}.

\qa

\bigskip

 Now turning to Theorem~\ref{asymptotic} of S. Boucksom in the appendix, we remark that it answers a natural question on asymptotic multiplier ideals, which may be of independent interest in algebraic geometry.

\begin{theorem}[= Theorem~\ref{asymptotic}]\label{asymptotic0}

Let $X$ be a smooth irreducible complex variety. Let $\mfab$ be a graded system of ideal sheaves on $X$. Let $\vp_{\mfab} := \log \left( \sum^{\infty}_{k \ge 1}  \ep_k  \abs{\mfa_k}^{\frac{1}{k}} \right) $ be a Siu psh function associated to $\mfab$  with a choice of coefficients $\ep_k > 0$ (see \eqref{siu}).  Then for every real $c > 0$, the analytic multiplier ideal sheaf $\JJ(c \vp_{\mfab})$ of the psh function $c \vp_{\mfab}$ is equal to the asymptotic multiplier ideal sheaf $\JJ  (c \cdot \mfab)$.

\end{theorem}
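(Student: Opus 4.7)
The plan is to prove the equality $\JJ(c\vp_{\mfab}) = \JJ(c\cdot \mfab)$ by establishing the two opposite inclusions separately.

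For the easy inclusion $\JJ(c\cdot\mfab) \subseteq \JJ(c\vp_{\mfab})$, I observe that $\sum_k \ep_k|\mfa_k|^{1/k} \geq \ep_p|\mfa_p|^{1/p}$ for every $p \geq 1$, so pointwise $\vp_{\mfab} \geq \log\ep_p + \tfrac{1}{p}\log|\mfa_p|$. Since an additive constant does not affect multiplier ideals, this yields $\JJ(\tfrac{c}{p}\mfa_p) \subseteq \JJ(c\vp_{\mfab})$ for every $p$. By definition the asymptotic multiplier ideal $\JJ(c\cdot\mfab)$ is the maximal (equivalently, the union over $p$) of the coherent sheaves $\JJ(\tfrac{c}{p}\mfa_p)$, so the inclusion follows with no further input.

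For the harder inclusion $\JJ(c\vp_{\mfab}) \subseteq \JJ(c\cdot\mfab)$, the plan is first to apply the Guan--Zhou strong openness theorem to upgrade $f \in \JJ(c\vp_{\mfab})$ to $f \in \JJ((1+\eta)c\vp_{\mfab})$ for some $\eta > 0$, and then to establish a pointwise upper bound
$$\vp_{\mfab} \leq \frac{1}{(1+\eta)p}\log|\mfa_p| + O(1)$$
in a neighborhood of $x$, for some $p$ sufficiently divisible depending on $\eta$. Together these would yield $f \in \JJ(\tfrac{c}{p}\mfa_p) \subseteq \JJ(c\cdot\mfab)$, completing the argument. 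The graded submultiplicativity $\mfa_k^m \subseteq \mfa_{km}$ gives $|\mfa_{km}|^{1/(km)} \geq |\mfa_k|^{1/k}$ up to local constants tending to $1$ as $m \to \infty$, so the quantities $|\mfa_p|^{1/p}$ increase (in divisibility order) and converge pointwise to some limit $F_\infty$; combined with $\sum_k \ep_k < \infty$, this should force $e^{\vp_{\mfab}} \leq C \cdot F_\infty$ near $x$. What remains is to upgrade this asymptotic bound into domination by a single $|\mfa_p|^{1/p}$, at the cost of the small exponent slack $1/(1+\eta)$ provided by strong openness.

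The main obstacle I anticipate is precisely that last uniform approximation step: no single finite $p$ can control the entire Siu sum pointwise, so the argument cannot proceed purely with pointwise estimates. To overcome it I would expect to need either Demailly's equisingular Bergman-kernel approximation, which replaces $\vp_{\mfab}$ by psh functions of the form $\tfrac{1}{m}\log|\mathfrak{b}_m|$ that have the same multiplier ideals and whose defining ideals $\mathfrak{b}_\bullet$ can be compared with $\mfab$, or a valuative approach using the Fekete convergence of $\nu(\mfa_k)/k$ for every divisorial valuation $\nu$ together with a valuative characterization of analytic multiplier ideals on a simultaneous log resolution. Either route must reduce the infinite Siu series to finitely-many-valuation control, and that is where I expect the real technical work to lie.
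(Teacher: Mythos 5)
Your first inclusion $\JJ(c\cdot\mfab)\subseteq\JJ(c\vp_{\mfab})$ is correct and is exactly how the paper obtains it. The problem is the converse inclusion, which your proposal does not actually prove: it sets up a strategy, correctly diagnoses that the strategy cannot work as stated, and then defers the resolution to one of two unexecuted alternatives. Concretely, the pointwise bound $\vp_{\mfab}\le\frac{1}{(1+\eta)p}\log\abs{\mfa_p}+O(1)$ you aim for is not available in general: by graded submultiplicativity the truncations $\frac{1}{k}\log\abs{\mfa_k}$ become \emph{less} singular as $k$ runs through multiples, and $\vp_{\mfab}$ dominates all of them, so you would need a single $p$ whose slightly rescaled truncation is more singular than the entire limit. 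The rescaling by $1/(1+\eta)$ repairs the gap between $v(\vp_{\mfab})$ and $\frac{1}{p}v(\mfa_p)$ only for finitely many divisorial valuations $v$ at a time, whereas a pointwise (or bounded-difference) comparison requires uniform control over all of them; this is precisely the obstruction you name in your last paragraph without overcoming it.

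What the paper does instead (Boucksom's argument in the appendix) is to abandon pointwise comparison entirely and work valuatively on both sides. First one shows $v(\vp_{\mfab})=v(\mfab)$ for every divisorial valuation $v$: the inequality $v(\vp_{\mfab})\le\inf_k\frac{1}{k}v(\mfa_k)=v(\mfab)$ follows from $\vp_{\mfab}\ge\frac{1}{k!}\log\abs{\mfa_{k!}}+O(1)$, and the reverse inequality follows because, after adjusting constants, $\vp_{\mfab}$ is an \emph{increasing} limit of the functions $\frac{1}{k!}\log\abs{\mfa_{k!}}$, so $v(\vp_{\mfab})\ge\limsup_k\frac{1}{k!}v(\mfa_{k!})=v(\mfab)$ (this uses the Fekete convergence you mention). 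Then one invokes two valuative criteria that are \emph{uniform over all divisorial valuations} thanks to the log discrepancy term $A(v)$: by Boucksom--Favre--Jonsson, $f\in\JJ(c\vp_{\mfab})$ iff $v(f)\ge(1+\ep)\,v(c\vp_{\mfab})-A(v)$ for all $v$ and some $\ep>0$ (this $\ep$ is the valuative incarnation of the strong openness you wanted to invoke), and by Boucksom--de Fernex--Favre--Urbinati, $f\in\JJ(c\cdot\mfab)$ iff $v(f)\ge c\,v(\mfab)-A(v)+\delta\,v(\mathcal{I}_N)$ for all $v$ and all small $\delta>0$. Once $v(c\vp_{\mfab})=c\,v(\mfab)$ is known, comparing the two criteria is a one-line inequality. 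So your second fallback route is the right one, but the essential missing ingredients are the identity $v(\vp_{\mfab})=v(\mfab)$ for \emph{all} $v$ (in particular its lower bound, which needs the increasing-limit structure of the Siu series) and the two uniform valuative characterizations that convert this identity into an equality of ideals.
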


 Asymptotic multiplier ideals (see e.g. \cite{L}, \cite{Ka99}, \cite{ELS01}) were first brought into focus when they replaced the role of Siu psh functions \eqref{siu} in casting the crucial analytic proof of invariance of plurigenera by \cite{S98} in algebro-geometric language. In \cite{S98}, Siu psh functions appeared as local weight functions of a global hermitian metric, a Siu-type metric, of a line bundle $L$. This metric can be regarded as the metric with minimal singularities coming from taking all the (multi-)sections of $L$, hence it is a metric analogue of the stable base locus of $L$. (See e.g. \cite[Sec.4]{K15} for its comparison with Demailly's version of the metric with minimal singularities of $L$.)

  However there remained a natural question asking whether the algebraically defined asymptotic multiplier ideals and the analytic multiplier ideals of Siu psh functions are precisely equal to each other (not only playing analogous roles in the above proof). This is answered positively by Theorem~\ref{asymptotic} in the most general setting of the question, i.e. for a graded system of ideals (\cite{ELS01}, \cite{L}). Previously a special case was known by \cite{DEL} (see Remark~\ref{del}).

 The proof of Theorem~\ref{asymptotic} uses valuative characterizations of multiplier ideals \cite{BFJ}, \cite{BFFU}. The valuative approach to study psh singularities (see \cite{FJ05}, \cite{BFJ} and the references therein) plays an increasingly important role recently and it certainly influences our study in this paper to a great extent.

  In this valuative spirit, we define two psh functions $\vp$ and $\psi$ to be v-equivalent (in Sec. 2.1, cf. \cite[\S2]{K19}) if all their multiplier ideals are equal i.e. $\JJ(m \vp) = \JJ(m \psi)$ for all real $m > 0$. Thus two v-equivalent psh functions have all the same jumping numbers. But there exist many v-equivalent psh functions $\vp$ and $\psi$ that look quite different from each other (in particular $\vp - \psi$ is not locally bounded) as we show in Example~\ref{graded} and Proposition~\ref{toric_convex}.

 Finally, in addition to the absense of cluster points, we show that another important property of jumping numbers in the algebraic case shown in \cite{ELSV} fails in the psh case : periodicity. We use an interesting example of Koike~\cite{Ko15}.

This article is organized as follows. In Section 2, we define and study Siu psh functions associated to a graded system of ideals, which play an important role in this paper.  In Section 3, we define jumping numbers, present their basic properties and examples, generalizing counterparts from \cite{ELSV}. We also study more general ``mixed" jumping numbers. In Section 4, we show that periodicity of jumping numbers as in \cite{ELSV} fails for general psh functions, elaborating on an example of Koike \cite{Ko15}. In Section 5, we present and prove the main results on the cluster points of jumping numbers of psh functions.


\begin{remark1}

 In a different context of algebraic multiplier ideals on singular varieties, there is an open question asking whether cluster points of jumping numbers ever exist in that algebraic setting (see e.g. \cite[Question 1.3]{G16}, \cite{U12}, \cite{T10}). See also \cite{BSTZ}.

\end{remark1}

\noi \textbf{Acknowledgements.}

 We are very grateful to S\'ebastien Boucksom for allowing us to include his proof of Theorem~\ref{asymptotic} in the appendix. We would like to thank Qi'an Guan for valuable communications and Alexander Rashkovskii for helpful comments on Lemma~\ref{Minkowski}. 

D.K. was partially supported by SRC-GAIA through  National Research Foundation of Korea grant No.2011-0030795. D.K. and H.S. were partially supported by Basic Science Research Program through  National Research Foundation of Korea funded by the Ministry of Education(2018R1D1A1B07049683) and by Seoul National University grant No.0450-20180027 (Mirae Gicho).

 \section{Multiplier ideals and plurisubharmonic functions }

For excellent general expositions on multiplier ideal sheaves and plurisubharmonic (psh) functions, we refer to \cite{D11} and also to \cite{L} for the algebraic theory of multiplier ideals.

\subsection{Valuative equivalence of psh functions}

 For a psh function $\vp$ defined on $U \subset X$, an open subset of a complex manifold, its multiplier ideal sheaf $\JJ(\vp)$ is the coherent ideal sheaf on $U$ consisting of holomorphic function germs $u$ such that $\abs{u}^2 e^{-2\vp}$ locally integrable with respect to Lebesgue measure.

 We have the usual notion of two psh functions $\vp, \psi$ having equivalent singularities \cite{D11}, i.e. when $\vp - \psi$ is locally bounded. We will often simply say $\vp$ and $\psi$ are equivalent.

   As a very convenient and flexible weaker notion (cf. \cite[\S2]{K19}),  we say that two psh functions $\vp$ and $\psi$ are \textbf{v-equivalent} and write $ \vp \sim_{v} \psi $ if the following equivalent conditions  hold:

 (1) For all $m > 0$, the multiplier ideals are equal : $\JJ(m\vp) = \JJ(m\psi)$.

 (2) At every point of all proper modifications over $X$, the Lelong numbers of $\vp$ and $\psi$ coincide. In other words, for every divisorial valuation $v$ centered on $X$, we have $v(\vp) = v(\psi)$.


 The above equivalence of (1) and (2) is due to \cite{BFJ} together with the following theorem in \cite{GZ}.

\begin{theorem}[Qi'an Guan and Xiangyu Zhou, Openness Theorem]\cite{GZ}\label{openness}

 Let $\psi$ and $\vp$ be two psh functions on a complex manifold $X$. We have the following equality of ideal sheaves

  $$ \bigcup_{\ep > 0} \JJ(\psi + \ep \vp) = \JJ(\psi). $$

\end{theorem}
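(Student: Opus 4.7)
The plan is to prove the two inclusions separately. The inclusion $\bigcup_{\ep > 0} \JJ(\psi + \ep\vp) \subset \JJ(\psi)$ is essentially formal; the reverse inclusion is the substantive ``strong openness'' content where all the difficulty lies.

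For the formal direction, fix $x \in X$ and use upper semicontinuity of $\vp$ to shrink to a neighborhood $U$ of $x$ on which $\vp \le C$ for some constant $C$. For any $u \in \JJ(\psi + \ep\vp)_x$,
$$ |u|^2 e^{-2\psi} \;=\; |u|^2 e^{-2(\psi + \ep\vp)} \cdot e^{2\ep\vp} \;\le\; e^{2\ep C} \, |u|^2 e^{-2(\psi + \ep\vp)} $$
on $U$, and the right-hand side is locally integrable by hypothesis, hence $u \in \JJ(\psi)_x$. This direction only uses that $\vp$ is psh and therefore locally bounded above.

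The reverse inclusion $\JJ(\psi) \subset \bigcup_{\ep > 0} \JJ(\psi + \ep\vp)$ is the hard one. Given $u \in \JJ(\psi)_x$, so that $|u|^2 e^{-2\psi}$ is locally integrable near $x$, I must produce some $\ep = \ep(u,x) > 0$ for which $|u|^2 e^{-2(\psi+\ep\vp)}$ is still locally integrable. The approach I would take is the one of Guan-Zhou \cite{GZ}: invoke an Ohsawa-Takegoshi type $L^2$ extension theorem with the \emph{optimal} constant. Schematically, one extends the value of a holomorphic datum at the center of a small ball to the whole ball, against a psh weight, with a sharp universal $L^2$ bound independent of the fine structure of the weight. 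Applied with the weight $\psi + \ep\vp$ on a sequence of shrinking balls and to carefully chosen sectional data built from $u$, the sharpness translates into an effective integrability improvement that can be tracked as $\ep \to 0$. One then argues by contradiction: if no $\ep > 0$ worked, one could manufacture a sequence of extensions whose weighted $L^2$ norms blow up in a manner incompatible with the universal sharp bound.

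The principal obstacle is precisely the sharpness of the extension constant. With a sub-optimal Ohsawa-Takegoshi constant one can only recover the weaker openness conjecture of Demailly-Koll\'ar, $\JJ(\psi) = \bigcup_{\ep > 0} \JJ((1+\ep)\psi)$, which is too coarse to absorb perturbation by an arbitrary independent psh $\vp$. Achieving the optimal constant (via the B\l{}ocki-Guan-Zhou optimal $L^2$ extension theorem) and propagating it through the limiting argument is the genuine analytic heart of the matter. A purely valuative alternative would rephrase both sides in terms of Lelong numbers on all proper modifications (using the BFJ machinery invoked later in the paper) and argue divisorially, but this ultimately reduces to the same analytic input and so is not an independent route.
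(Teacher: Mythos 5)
The paper does not prove this statement at all: it is quoted verbatim as the Guan--Zhou openness theorem with a citation to \cite{GZ}, so there is no in-paper argument to compare yours against. Your easy inclusion $\bigcup_{\ep>0}\JJ(\psi+\ep\vp)\subset\JJ(\psi)$ is correct and complete (local boundedness above of $\vp$ is all that is needed). For the hard inclusion you do not give a proof but a description of the Guan--Zhou strategy (optimal Ohsawa--Takegoshi constant, shrinking balls, contradiction argument); that is an accurate pointer to the literature, but as written it could not be checked or compiled into an actual argument, so if the intent was to reprove the theorem rather than cite it, the analytic heart is missing.

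One substantive misstatement in your closing paragraph: the single-weight statement $\JJ(\psi)=\bigcup_{\ep>0}\JJ((1+\ep)\psi)$ is not ``too coarse to absorb perturbation by an arbitrary independent psh $\vp$.'' The two-weight version stated here reduces to it by a routine argument: given $u$ with $|u|^2e^{-2(1+\delta)\psi}$ locally integrable for some $\delta>0$, H\"older's inequality with exponents $p=1+\delta$, $q=(1+\delta)/\delta$ gives
$$ \int |u|^2 e^{-2\psi-2\ep\vp} \le \left(\int |u|^2 e^{-2(1+\delta)\psi}\right)^{1/p}\left(\int |u|^2 e^{-2q\ep\vp}\right)^{1/q}, $$
and the second factor is finite for $\ep$ small by Skoda's lemma, since the Lelong number of $q\ep\vp$ at the point can be made less than $1$. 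So the genuinely hard input is exactly the ideal-version strong openness for a single weight (which, incidentally, is what is usually called the \emph{strong} openness conjecture; the Demailly--Koll\'ar openness conjecture is the special case $u=1$ --- your terminology has these two reversed). Correctly isolating that single-weight statement and then either citing \cite{GZ} for it or reproducing their optimal $L^2$-extension argument would make your write-up both accurate and complete.
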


 We remark that previously some special cases of this theorem (assuming $\psi = c \vp$ for a constant $c>0$)  were known: when $\dim X = 2$ in \cite{FJ05}, when $\vp$ is toric psh in \cite[(1.15)]{G} and  when $\vp$ is a Siu psh function (defined below) in \cite[(5.4)]{ELSV} together with using Theorem~\ref{asymptotic}.


\subsection{Graded systems of ideals and  Siu psh functions}

  Here we show that the algebraic asymptotic multiplier ideal is equal to the analytic multiplier ideal of a Siu psh function associated to the graded system of ideals.

  Let $X$ be a smooth irreducible complex variety (or a complex manifold). Let $\mfab$ be a graded system of ideal sheaves on $X$ \cite[Section 2.4.B]{L}.  We define a Siu psh function associated to $\mfab$ following \cite{S98} (see also \cite[p.258]{BEGZ}, \cite{K15}) by

 \begin{equation}\label{siu}
   \vp_{\mfab} = \log \left( \sum^{\infty}_{k \ge 1}  \ep_k  \abs{\mfa_k}^{\frac{1}{k}} \right)  = \log \left( \sum^{\infty}_{k \ge 1}  \ep_k   \left( \abs{g_1^{(k)}} + \ldots + \abs{g_m^{(k)}}  \right)^{\frac{1}{k}} \right)
 \end{equation}

\noi on a domain $U \subset X$ where every graded piece $\mfa_k$ is an ideal with a choice of a finite number of generators, say $g_1^{(k)}, \ldots, g_m^{(k)}$ ($m$ is not fixed). It is convenient to use  the notation $ \abs{\mfa_k} := \abs{g_1^{(k)}} + \ldots + \abs{g_m^{(k)}}$ as in \eqref{siu} with the convention that each time the notation  $ \abs{\mfa_k}$ is used, a specific choice of a finite number of generators is made.

 Also $ \ep_k $'s are a choice of positive coefficients such that the infinite series converges. It is known that in general the singularity equivalence class of $\vp_{\mfab}$ depends on the choice of coefficients (see \cite{K15}).

 In algebraic geometry, the algebraic construction of asymptotic multiplier ideals associated to $\mfab$ is now standard and has been very useful (see  \cite[Chap.11]{L}). Following \cite[(11.1.15)]{L}, the asymptotic multiplier ideal sheaf of $\mfab$ with coefficient $c$, $\JJ(c \cdot \mfab)$ is defined to be the unique maximal member in the family of ideals $\{ \JJ (\frac{c}{q} \cdot \mfa_q) \}_{q \ge 1}$.

\begin{theorem}[S\'ebastien Boucksom]\label{asymptotic}

For a Siu psh function $\vp_{\mfab}$ defined above (on $U \subset X$), we have $\JJ(c\vp_{\mfab}) = \JJ  (c \cdot \mfab)$ for every real $c > 0$.

 \end{theorem}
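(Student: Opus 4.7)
The plan is to reduce the equality of multiplier ideals to a valuative equality on divisorial valuations, and then establish that equality by pairing an obvious lower bound on $\vp_{\mfab}$ with a sharp upper bound coming from Kiselman's monotonicity. By the valuative characterization of analytic multiplier ideals due to Boucksom--Favre--Jonsson \cite{BFJ} and the parallel one for asymptotic multiplier ideals (see \cite{BFFU}), both $\JJ(c\vp_{\mfab})$ and $\JJ(c \cdot \mfab)$ are determined by the assignments
\[ v \ \mapsto \ v(\vp_{\mfab}) \quad \text{and} \quad v \ \mapsto \ v(\mfab) := \lim_{k \to \infty} \tfrac{v(\mfa_k)}{k} = \inf_{k \geq 1} \tfrac{v(\mfa_k)}{k} \]
on divisorial valuations $v$ centered on $X$; the limit in $v(\mfab)$ exists by Fekete's lemma applied to the subadditivity $v(\mfa_{k+l}) \leq v(\mfa_k) + v(\mfa_l)$, itself a consequence of $\mfa_k \cdot \mfa_l \subset \mfa_{k+l}$. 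Hence it suffices to prove the valuative identity $v(\vp_{\mfab}) = v(\mfab)$ for every such $v$.

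The inequality $v(\vp_{\mfab}) \leq v(\mfab)$ is immediate: since $\vp_{\mfab} \geq \log \ep_k + \tfrac{1}{k} \log |\mfa_k|$ pointwise for each $k$, comparison of Lelong numbers gives $v(\vp_{\mfab}) \leq v(\mfa_k)/k$, and taking the infimum over $k$ yields the claim.

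For the reverse inequality $v(\vp_{\mfab}) \geq v(\mfab)$, realize $v = \ord_E$ on a proper modification $\pi \colon X' \to X$ with $X'$ smooth and $E \subset X'$ a prime divisor. Pick a generic point $p \in E$ with local coordinates $(z, w) \in \CC \times \CC^{n-1}$ so that $E = \{z = 0\}$, and choose a polydisk neighborhood $V$ of $p$ of $z$-radius $R$ such that $K := \overline{\pi(V)} \subset U$ is compact. For each $k$, the psh function of the single variable $z$ given by $u_k(z) := \log |\pi^* \mfa_k(z, w)|$ (for $w$ fixed near $p$) has Lelong number at $z = 0$ at least $a_k := v(\mfa_k)$, and satisfies $u_k \leq \log \|\mfa_k\|_K$ on the $z$-disk of radius $R$. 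Kiselman's monotonicity lemma---the fact that $r \mapsto \sup_{|z| \leq r} u_k(z) - a_k \log r$ is non-decreasing in $r$---then yields
\[ |\pi^* \mfa_k(z, w)| \leq \|\mfa_k\|_K \cdot (|z|/R)^{a_k} \qquad \text{on } V. \]
Taking $k$-th roots, using $a_k/k \geq v(\mfab)$ together with $|z|/R \leq 1$, and summing against the coefficients $\ep_k$ gives
\[ e^{\pi^* \vp_{\mfab}(z, w)} \leq (|z|/R)^{v(\mfab)} \sum_{k \geq 1} \ep_k \|\mfa_k\|_K^{1/k}, \]
where the sum on the right is finite by the convergence hypothesis for the Siu series on a neighborhood of $K$. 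Therefore $\pi^* \vp_{\mfab}(z, w) \leq v(\mfab) \log |z| + C$ on $V$, which upon taking the generic Lelong number along $E$ produces $v(\vp_{\mfab}) = \nu_E(\pi^* \vp_{\mfab}) \geq v(\mfab)$, as required.

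The main obstacle is this last step. A naive approach using Schwarz's lemma generator-by-generator would introduce a factor equal to the number of generators of $\mfa_k$, which in general need not behave well as $k \to \infty$ (so the $k$-th root may fail to stay bounded). The key idea circumventing this is to apply Kiselman's upper bound directly at the level of the single psh function $\log |\pi^* \mfa_k|$: this uses only the intrinsic invariants $a_k$ and $\|\mfa_k\|_K$, producing the clean estimate above whose $k$-th roots sum against $\ep_k$ by the very assumption that $\vp_{\mfab}$ is a well-defined Siu series on $U$.
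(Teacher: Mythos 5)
Your proof is correct, and for the key inequality it takes a genuinely different route from the paper's. Both arguments reduce the theorem to the valuative identity $v(\vp_{\mfab})=v(\mfab)$ for all divisorial valuations $v$, and both obtain $v(\vp_{\mfab})\le v(\mfab)$ from the trivial pointwise lower bound on the Siu series. For the reverse inequality the paper follows \cite{BEGZ}: after adding constants, $\vp_{\mfab}$ is realized as the increasing limit of $\phi_k=\frac{1}{k!}\log\abs{\mfa_{k!}}$, and one invokes the convergence of divisorial valuations along increasing sequences of psh functions to get $v(\vp_{\mfab})\ge \lim_k v(\phi_k)=v(\mfab)$. You instead prove the bound directly on a modification, applying the Hadamard--Kiselman monotonicity of $r\mapsto\sup_{|z|\le r}u-a\log r$ to $u=\log\abs{\pi^*\mfa_k}$ on slices transverse to $E$, which yields the uniform estimate $\abs{\pi^*\mfa_k}\le \sup_K\abs{\mfa_k}\cdot(|z|/R)^{v(\mfa_k)}$ whose $k$-th roots can be summed against the $\ep_k$. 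Your route is more elementary and self-contained for this step (it needs only subadditivity of $v(\mfa_k)$ and the maximum principle, not the monotone-continuity of valuations), at the price of requiring $\sum_k\ep_k\bigl(\sup_K\abs{\mfa_k}\bigr)^{1/k}<\infty$ on compacts; since that is the standard convergence requirement in the Siu construction, this is acceptable, and your observation that a generator-by-generator Schwarz lemma would fail is exactly right.

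One step is elided in your reduction, however. The two valuative characterizations you invoke are not of the same form: \cite{BFJ} characterizes $f\in\JJ(c\vp)$ by the existence of $\ep>0$ with $v(f)\ge(1+\ep)\,v(c\vp)-A(v)$ for all $v$, while \cite{BFFU} characterizes $f\in\JJ(c\cdot\mfab)$ via a perturbation $\delta\, v(\II_N)$ by normalizing subschemes. Knowing $v(\vp_{\mfab})=v(\mfab)$ therefore does not by itself show that the two conditions cut out the same ideal, and saying both ideals are ``determined by'' the valuative data skips this bridge. The paper closes it explicitly: the inclusion $\JJ(c\cdot\mfab)=\JJ(\tfrac{c}{m}\mfa_m)\subset\JJ(c\vp_{\mfab})$ follows from the pointwise bound, and for the converse one chooses $\delta$ with $\delta\, v(\II_N)<\ep\, v(c\vp_{\mfab})$ so that the \cite{BFJ} condition implies the \cite{BFFU} one. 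Add this short comparison and your argument is complete.
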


\noi The proof of this theorem is in the appendix.  It is interesting to note that both Siu psh functions and asymptotic multiplier ideals first appeared in the context of the celebrated work by Yum-Tong Siu on the invariance of plurigenera \cite{S98} in the general type case and  in the subsequent algebraic translation \cite{Ka99}, \cite[(11.1.A)]{L} (and then also in \cite{ELS01}).   We immediately obtain the following

 \begin{corollary}\label{vequiv}

 Let $\vp =  \log \left( \sum^{\infty}_{k \ge 1}  \ep_k  \abs{\mfa_k}^{\frac{1}{k}} \right)  $ and $\tilde{\vp} =  \log \left( \sum^{\infty}_{k \ge 1}  \tilde{\ep}_k  \abs{\mfa_k}^{\frac{1}{k}} \right)  $ be two Siu psh functions associated to a graded system of ideals $\mfab$ with all $\ep_k , \tilde{\ep}_k > 0$. Then for every $c > 0$, we have $\JJ (c \vp ) = \JJ (c \tilde{\vp}) $.  In particular, $\vp$ and $\tilde{\vp}$ are v-equivalent psh functions.

 \end{corollary}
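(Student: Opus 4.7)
The plan is to deduce this directly from Theorem~\ref{asymptotic}, since the corollary is essentially the observation that the right-hand side of the identity $\JJ(c\vp_\mfab) = \JJ(c\cdot\mfab)$ is manifestly independent of the choice of positive coefficients $\ep_k$. First I would apply Theorem~\ref{asymptotic} to the Siu psh function $\vp$, which gives $\JJ(c\vp) = \JJ(c\cdot\mfab)$ for every real $c>0$. Applying the same theorem to $\tilde{\vp}$ yields $\JJ(c\tilde{\vp}) = \JJ(c\cdot\mfab)$ as well. The asymptotic multiplier ideal $\JJ(c\cdot\mfab)$ is defined purely algebraically in terms of the graded system $\mfab$ (as the unique maximal member of $\{\JJ(\frac{c}{q}\cdot\mfa_q)\}_{q\ge 1}$, following \cite[(11.1.15)]{L}), and in particular involves no choice of $\ep_k$'s. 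Comparing the two identities therefore gives $\JJ(c\vp) = \JJ(c\tilde{\vp})$ for every $c>0$.

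For the ``in particular'' clause, I would simply invoke condition (1) of the definition of v-equivalence from Section 2.1: two psh functions are v-equivalent precisely when $\JJ(m\vp) = \JJ(m\psi)$ for all real $m>0$. Since we have just established this condition for $\vp$ and $\tilde{\vp}$, we conclude $\vp \sim_v \tilde{\vp}$.

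There is no real obstacle in the proof of the corollary itself; all the substantive content is contained in Theorem~\ref{asymptotic}, which is proved in the appendix. The corollary is a purely formal consequence whose only role is to record the coefficient-independence of multiplier ideals of Siu psh functions, a point worth emphasizing because it is known (as noted in the text, citing \cite{K15}) that the singularity equivalence class of $\vp_\mfab$ in the usual sense of $\vp - \tilde{\vp}$ being locally bounded does genuinely depend on the $\ep_k$'s. Thus the corollary highlights exactly how v-equivalence is strictly weaker than equivalence of singularities, which sets up the examples appearing later in the paper.
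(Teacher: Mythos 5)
Your proposal is correct and matches the paper's intended argument: the corollary is stated as an immediate consequence of Theorem~\ref{asymptotic}, obtained exactly as you describe by applying the theorem to both $\vp$ and $\tilde{\vp}$ and noting that $\JJ(c\cdot\mfab)$ does not depend on the coefficients. The v-equivalence claim then follows from condition (1) of the definition, just as you say.
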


  Here we remark that for each $\mfa_k$, two different choices of generators are allowed for $\abs{\mfa_k}$ used in $\vp$ and in $\tilde{\vp}$ respectively, following the previously made convention for $\abs{\mfa_k}$.

\begin{remark1}\label{del}

   Theorem~\ref{asymptotic} and Corollary~\ref{vequiv} completely generalize the previously known `global' special case when the Siu psh functions $\vp$ and $\tilde{\vp}$ are given as local weight functions of two singular hermitian metrics of Siu type of a big line bundle on a projective complex manifold: see the comment before \cite[(1.1)]{K15} where    \cite[Theorem 1.11]{DEL} together with Theorem~\ref{openness} was used for this.

\end{remark1}

\begin{remark1}

 For our purpose in this paper, it is sufficient to define a Siu psh function only on $U \subset X$ as in \eqref{siu} where each ideal $\mfa_k$ has a chosen finite set of generators. More generally, when each of $\mfa_k$ is an ideal sheaf and $X$ is covered by such $U$'s, there are two options. One is to use the global version of a Siu psh function, namely a singular hermitian metric of a line bundle which is locally of the form \eqref{siu}. The other is only to define Siu psh functions on each $U$ as above, and in the intersections, different Siu psh functions will be v-equivalent as in Corollary~\ref{vequiv}. In either case, Theorem~\ref{asymptotic} still makes sense.

\end{remark1}

 \begin{remark1}

   Theorem~\ref{asymptotic} has the pleasant consequence that all the types of algebraic multiplier ideals appearing in the standard algebraic theory of multiplier ideals as in \cite[Part Three]{L} can now be considered in the uniform setting of psh functions (and singular hermitian metrics of line bundles which have psh local weight functions) : namely, multiplier ideals associated to an effective $\QQ$-divisor, to an ideal sheaf, to a linear system and asymptotic multiplier ideals. See Definitions (9.2.1), (9.2.3), (9.2.10), (11.1.2), (11.1.15), (11.1.24) in \cite{L}.

 \end{remark1}

  \subsection{Toric psh functions and Newton convex bodies}\label{toricpsh}

 A psh function $\vp(z_1, \ldots, z_n)$ is called \emph{toric} (or \emph{multi-circled} in \cite{R13b}) if the value depends only on $\abs{z_1}, \ldots, \abs{z_n}$, i.e.

\noi  $\vp(z_1, \ldots, z_n) = \vp (\abs{z_1}, \ldots, \abs{z_n})$. (See e.g. \cite{G} and \cite{R13b} for more information on toric psh functions.) Toric psh functions play an important role in this paper. For the purpose of this paper, we may assume that the domain of toric psh functions in the following is the unit polydisk $\DD^n = D(0,1) \subset \CC^n$ (i.e. with the center at the origin and the polyradius $(1,\ldots,1)$).

 It is well known that given a toric psh function $\vp(z_1, \ldots, z_n)$ on $\DD^n$, one has a convex function $\widehat{\vp}$ on $\RR^n_{-}$, non-decreasing in each variable, associated to $\vp$ such that $$\vp(z_1, \ldots, z_n) = \widehat{\vp} ( \log\abs{z_1}, \ldots, \log\abs{z_n}) .$$

 Moreover, there is a naturally associated closed convex subset $P(\vp) \subset \RRn$ satisfying $P(\vp) + \RRn \subset P(\vp)$ that generalizes the Newton polyhedron associated to a monomial ideal. See \cite[(3.1)]{R13b} for its construction where it is called the indicator diagram of $\vp$. We call $P(\vp)$ the Newton convex body of $\vp$. (This $P(\vp)$ is the closure of what is called the Newton convex body of $\vp$ in \cite{G}, see \cite[Lemma 1.19]{G}.)  The key property of $P(\vp)$ we will use is the following characterization of multiplier ideals.

\begin{proposition}\label{multi}\cite[Theorem A]{G}, \cite[Proposition 3.1]{R13b}

Let $\vp = \vp(z_1, \ldots, z_n)$ be a toric psh function. Then the multiplier ideal $\JJ(\vp)$  is a monomial ideal such that $z_1^{a_1} \ldots z_n^{a_n} \in \JJ( \vp)$ if and only if $(a_1 + 1, \ldots, a_n +1) \in \interior P(\vp) $.

\end{proposition}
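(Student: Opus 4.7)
My plan is to split the proof in two: first to establish that $\JJ(\vp)$ is a monomial ideal by exploiting the $(S^1)^n$-symmetry of $\vp$, and then to identify exactly which monomials lie in the ideal through an integrability calculation in log-polar coordinates.

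\emph{Step 1: $\JJ(\vp)$ is monomial.} Let $u$ be a germ of a holomorphic function at $0$ with Taylor expansion $u(z) = \sum_\alpha c_\alpha z^\alpha$. Since $\vp$ is toric, the measure $e^{-2\vp}\vol$ is invariant under the diagonal action $z \mapsto (e^{i\theta_1}z_1, \ldots, e^{i\theta_n}z_n)$. Averaging $\abs{u(e^{i\theta}z)}^2 e^{-2\vp(z)}$ over $\theta \in (S^1)^n$ and using orthogonality of characters on the real torus, I obtain
$$\int_{D(0,r)} \abs{u}^2 e^{-2\vp}\vol \;=\; \sum_\alpha \abs{c_\alpha}^2 \int_{D(0,r)} \abs{z^\alpha}^2 e^{-2\vp}\vol$$
for every small polydisk $D(0,r)$. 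Consequently $u \in \JJ(\vp)_0$ if and only if every occurring monomial $z^\alpha$ belongs to $\JJ(\vp)_0$, which proves that $\JJ(\vp)$ is monomial.

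\emph{Step 2: Reduction to an integral over $\RR^n_-$.} Passing to polar coordinates $z_j = r_j e^{i\theta_j}$ and then substituting $t_j = \log r_j$ with $dr_j = e^{t_j}dt_j$, I convert
$$\int_{\DD^n} \abs{z^a}^2 e^{-2\vp}\vol \;=\; (2\pi)^n \int_{\RR^n_-} e^{\,2[\langle a+\mathbf{1},\,t\rangle - \widehat{\vp}(t)]}\, dt_1\cdots dt_n,$$
where $a+\mathbf{1}=(a_1+1,\ldots,a_n+1)$. Thus the task reduces to deciding for which $a \in \ZZ_{\ge 0}^n$ this integral is finite.

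\emph{Step 3: Convex-analytic interpretation via $P(\vp)$.} Since $\widehat{\vp}$ is convex and locally bounded near $t=0$, finiteness of the above integral depends only on the asymptotic behavior as $t \to -\infty$. I would invoke the identification of the indicator $\Psi_\vp(t):=\lim_{\lambda\to+\infty}\widehat{\vp}(\lambda t)/\lambda$ on $\RR^n_-$ as the support function of $P(\vp)$, namely $\Psi_\vp(t)=\sup_{p \in P(\vp)}\langle p,t\rangle$. Combining this with convexity of $\widehat{\vp}$, one shows that the integral is finite if and only if $\langle a+\mathbf{1},v\rangle < \Psi_\vp(v)$ for every $v \in \RR^n_-\setminus\{0\}$, which by definition of the support function is precisely the condition $a+\mathbf{1}\in\interior P(\vp)$.

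\emph{Main obstacle.} The delicate direction is the \emph{boundary} case: I must show divergence when $a+\mathbf{1}\in \partial P(\vp)$. A single supporting hyperplane only yields flat (rather than strictly negative) integrand decay across a whole half-space of directions, so divergence is not immediate. My plan is to choose a supporting hyperplane at $a+\mathbf{1}$ with outer normal $v_0 \in \RR^n_-$, restrict the integral to a narrow cone of directions around $v_0$, and combine convexity of $\widehat{\vp}$ with the one-dimensional fact $\int_{-\infty}^{0} e^{2[\langle a+\mathbf{1},\,t v_0\rangle - \widehat{\vp}(tv_0)]}\,dt = +\infty$ to conclude. This is the toric incarnation of the openness statement, and while more delicate than the interior direction, it is much more elementary here than the general openness theorem of Guan--Zhou quoted in Theorem~\ref{openness}.
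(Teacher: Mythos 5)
First, a point of comparison: the paper does not actually prove Proposition~\ref{multi}; it is quoted from \cite[Theorem A]{G} and \cite[Proposition 3.1]{R13b}, so there is no internal proof to measure you against. Your outline is essentially the argument of those references (going back to Howald's theorem in the monomial case): torus-averaging plus Parseval to reduce to monomials, log-polar coordinates to convert integrability of $\abs{z^a}^2e^{-2\vp}$ into finiteness of $\int_{\RR^n_-}e^{2[\langle a+\mathbf{1},t\rangle-\widehat{\vp}(t)]}\,dt$, and convex analysis to read the answer off $P(\vp)$. Steps 1 and 2 are correct and standard.

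Step 3 needs tightening in both directions, and the right tool is the Legendre transform rather than the pointwise support-function inequalities. For convergence: knowing $\langle b,v\rangle<\Psi_\vp(v)$ for every direction $v$ does not by itself give a uniform decay rate, because $\widehat{\vp}(\lambda v)/\lambda\to\Psi_\vp(v)$ is not uniform in $v$; already in dimension one, $\widehat{\vp}(t)=\gamma t-\sqrt{-t}$ is convex increasing with $\Psi_\vp(t)-\widehat{\vp}(t)=\sqrt{-t}\to+\infty$. The clean route is $P(\vp)=\overline{\Dom(\widehat{\vp}^{\,*})}$, so $b:=a+\mathbf{1}\in\interior P(\vp)$ gives $\widehat{\vp}^{\,*}(b-\delta\mathbf{1})=C<\infty$ for some $\delta>0$, whence $\langle b,t\rangle-\widehat{\vp}(t)\le C+\delta\sum_i t_i\le C-\tfrac{\delta}{\sqrt{n}}\abs{t}$ on $\RR^n_-$ and the integral converges. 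For divergence at $b\in\partial P(\vp)$: your cone argument does not close as stated, because the integrand is in general \emph{not} bounded below on a cone around $v_0$ --- in every direction $w$ with $\langle b,w\rangle<\Psi_\vp(w)$ the exponent still decreases linearly, with a rate degenerating only as $w\to v_0$. The clean fix is concavity of $u(t):=\langle b,t\rangle-\widehat{\vp}(t)$: on each ray $t_i+\RR_{\ge0}v_0$ one has $\widehat{\vp}(t_i+\lambda v_0)\le\widehat{\vp}(t_i)+\lambda\Psi_\vp(v_0)$ and $\langle b,v_0\rangle=\Psi_\vp(v_0)$, hence $u\ge u(t_i)$ on that ray; choosing base points $t_1,\dots,t_n$ so that the prism $\mathrm{conv}(t_1,\dots,t_n)+\RR_{\ge0}v_0$ is $n$-dimensional (it stays in the domain since $v_0\in\RR^n_{\le0}$), concavity bounds $u$ below on this set of infinite Lebesgue measure, and the integral diverges. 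With these two repairs your proof is complete and self-contained, independent of the quoted references.
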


 Now given a closed convex subset $P \subset \RRn$ satisfying $P + \RRn \subset P$, one can associate a graded system of monomial ideals as in \cite{M02}, \cite{ELSV}, \cite[\S 8]{JM}, as follows:

  \begin{example}\label{graded}

 Make a choice of an infinite sequence of closed rational convex polyhedra satisfying

  (1) $ R_1 \subseteq R_2 \subseteq R_3 \subseteq \cdots \subseteq P$,

  (2) $R_m + \RRn \subset R_m$ for every $m$ and

  (3) $P = \bigcup R_m$.

  Then $E_k := k \cdot R_k$ satisfies $E_l + E_m \subseteq E_{l + m} \quad \text{for all $l,m \ge 1$}$.

  Let $\mathfrak{a}_k$ be the ideal generated by all monomials in $\CC [x_1, \ldots, x_n]$ whose exponents are contained in $E_k$. Then the family $\mathfrak{a}_\bullet = (\mathfrak{a}_k)$ is a graded system of monomial ideals. Now take a Siu  psh function associated to $\mfab$:

 \begin{equation}\label{siu2}
  \vp = \vp_{\mfab} = \log \left( \sum^{\infty}_{k \ge 1}  \ep_k  \abs{\mfa_k}^{\frac{1}{k}} \right)
 \end{equation}

\noindent where $\ep_k$ is a choice of a sequence of nonnegative coefficients to make the series converge.

  \end{example}

  Using the above, we can show that given a Newton convex body, there exist many toric psh functions sharing the same Newton convex body using the following proposition. Note that  $  \vp = \vp_{\mfab}$  in the above example is a toric psh function.

\begin{figure}[t]
\label{figure1}
\centering
\begin{subfigure}{0.45\textwidth}
    \includegraphics[width=0.9\textwidth]{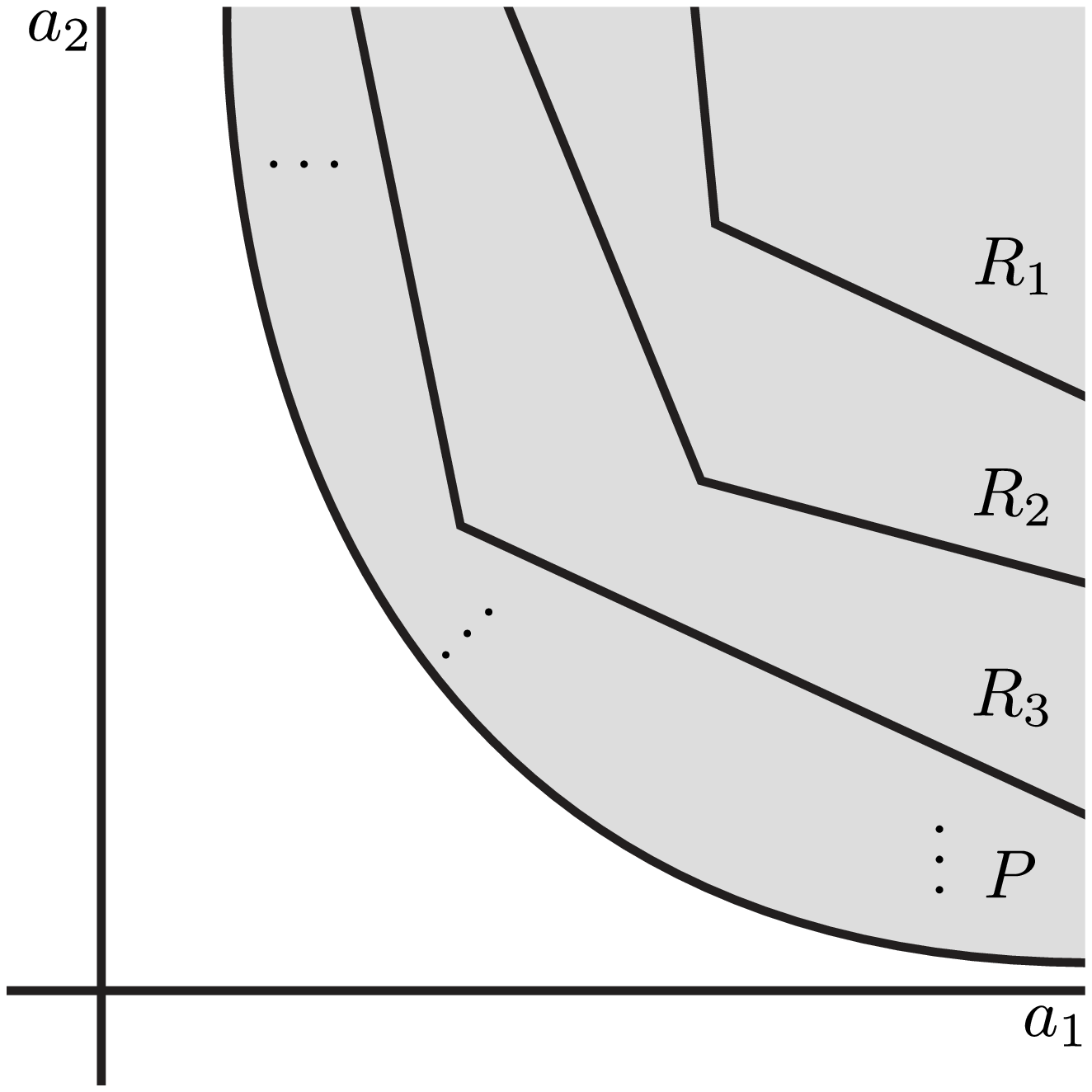}
\end{subfigure} \hfill
\begin{subfigure}{0.45\textwidth}
    \includegraphics[width=0.9\textwidth]{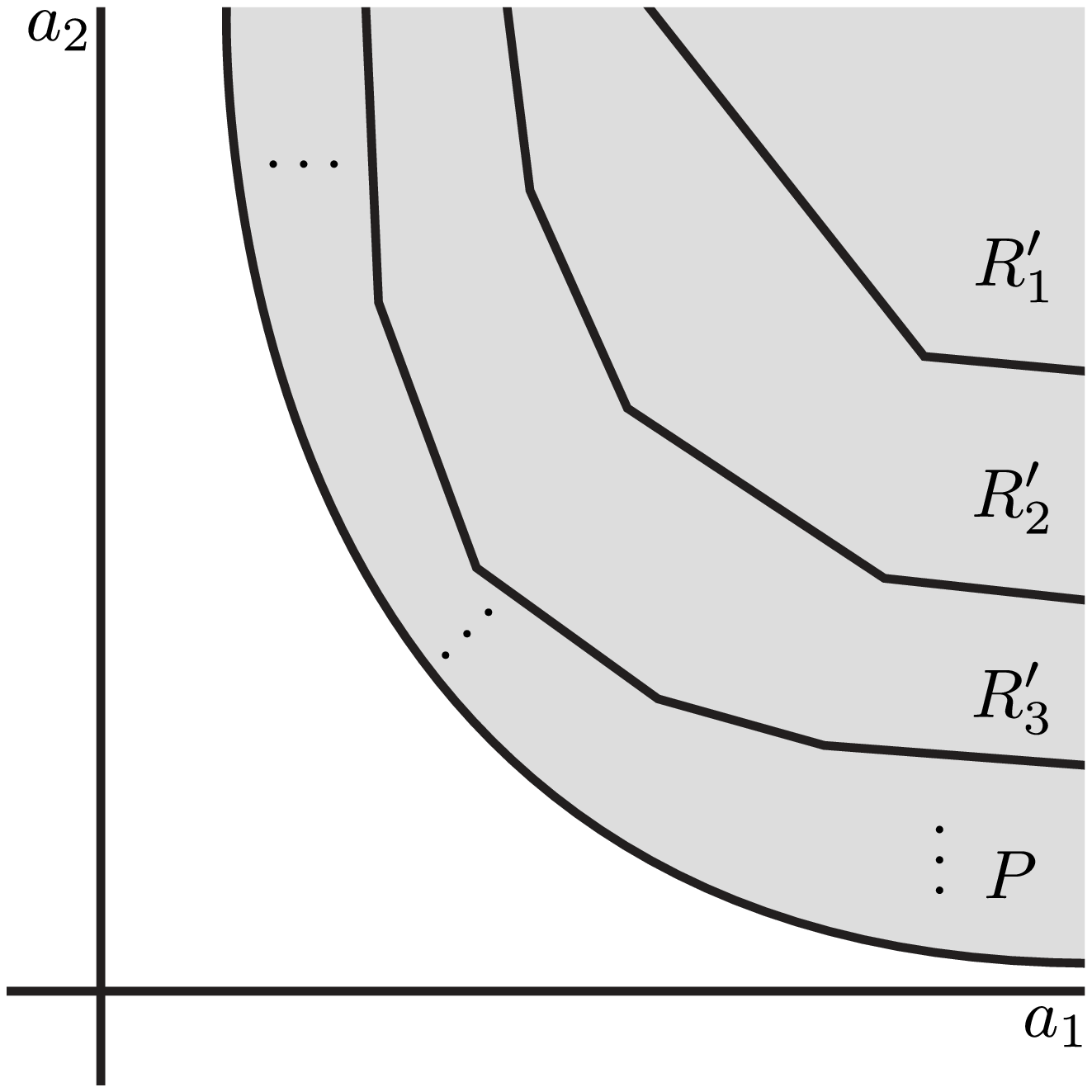}
\end{subfigure}
\caption{Two different choices of a sequence of closed rational convex polyhedra for $P$ in Example~\ref{graded}}
\end{figure}

 \begin{proposition} \label{toric_convex}

  Let $P \subset \RRn$ and $\vp_{\mfab}$   be    as   in Example~\ref{graded}.  The Newton convex body of $\vp_{\mfab}$ is equal to $P$.

 \end{proposition}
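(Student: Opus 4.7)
The plan is to compute the multiplier ideals $\JJ(c\vp_\mfab)$ for every real $c>0$ in two independent ways and then match the resulting monomial ideals on integer lattice points to force the equality $P(\vp_\mfab)=P$ of Newton convex bodies.

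On the algebraic side, I would apply Howald's formula for multiplier ideals of monomial ideals to $\mfa_k$ with coefficient $c/k$, giving
\[
\JJ\!\Bigl(\tfrac{c}{k}\,\mfa_k\Bigr)=\bigl(\,z^a \;:\; a+\mathbf{1}\in\interior(\tfrac{c}{k}\,N(\mfa_k))\,\bigr),
\]
where $N(\mfa_k)$ denotes the Newton polyhedron of $\mfa_k$. Taking the maximum over $k\ge 1$ in this directed family and invoking the definition of asymptotic multiplier ideal together with Theorem~\ref{asymptotic}, I obtain
\[
\JJ(c\vp_\mfab)=\JJ(c\cdot\mfab)=\bigl(\,z^a \;:\; a+\mathbf{1}\in\textstyle\bigcup_k \interior(\tfrac{c}{k}\,N(\mfa_k))\,\bigr).
\]
On the analytic side, since $c\vp_\mfab$ is toric psh with associated convex function $c\widehat{\vp_\mfab}$, its Newton convex body is $c\cdot P(\vp_\mfab)$ by the standard scaling property, and Proposition~\ref{multi} yields
\[
\JJ(c\vp_\mfab)=\bigl(\,z^a \;:\; a+\mathbf{1}\in c\cdot\interior P(\vp_\mfab)\,\bigr).
\]

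Matching these two descriptions, for every $c>0$ and $a\in\ZZ^n_{\ge 0}$ the point $(a+\mathbf{1})/c$ lies in $\interior P(\vp_\mfab)$ if and only if it lies in $U:=\bigcup_k \tfrac{1}{k}\interior N(\mfa_k)$. Both sets are open convex subsets of $\RR^n_{>0}$: the former by definition of the Newton convex body, and the latter because the graded condition $\mfa_p\cdot\mfa_q\subset\mfa_{p+q}$ (via $\mfa_p^q\subset\mfa_{pq}$) gives $\tfrac{1}{p}N(\mfa_p)\cup\tfrac{1}{q}N(\mfa_q)\subset\tfrac{1}{pq}N(\mfa_{pq})$, making $U$ a directed union of open convex sets. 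Since the points $(a+\mathbf{1})/c$ with $a\in\ZZ^n_{\ge 0}$ and $c>0$ comprise $\QQ^n_{>0}$, which is dense in $\RR^n$, and two open convex sets agreeing on a dense subset are equal, I conclude $\interior P(\vp_\mfab)=U$.

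To finish, I would verify $U=\interior P$. The inclusion $U\subset\interior P$ is immediate from $N(\mfa_k)\subset kR_k$ and $R_k\subset P$. For the reverse, given $p\in\interior P$, I would use that convex sets with the same closure have equal interior (applied to $P=\bigcup_l R_l$) to find $l$ with $p\in\interior R_l$; fixing $\delta>0$ with $B(p,\delta)\subset R_l$, for any $k\ge l$ the ball $B(kp,k\delta)$ lies in $kR_k=E_k$, and its radius tends to infinity, so the integer lattice points in $E_k$ near $kp$ form a fine enough grid for $kp$ to lie in the interior of their convex hull, yielding $p\in\tfrac{1}{k}\interior N(\mfa_k)\subset U$. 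Having shown $\interior P(\vp_\mfab)=\interior P$, and since both $P(\vp_\mfab)$ and $P$ are closed convex (with nonempty interior in the nondegenerate case), taking closures of the common interior gives $P(\vp_\mfab)=P$. The main technical obstacle I anticipate is precisely this last lattice-point step: the rational polyhedra $R_k$ need not have integer vertices after scaling, so $N(\mfa_k)$ can be strictly smaller than $E_k=kR_k$ for any single $k$, and the argument depends on the asymptotic density of lattice points in $E_k$ as $k\to\infty$ to place $kp$ in $\interior N(\mfa_k)$.
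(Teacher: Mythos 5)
Your proof is correct and follows the same strategy as the paper: combine Theorem~\ref{asymptotic} with the Howald-type description of $\JJ(c\vp_{\mfab})$ from Proposition~\ref{multi} and the Howald-type description of the asymptotic multiplier ideals $\JJ(c\cdot\mfab)$, and conclude that the two Newton convex bodies must coincide. The only difference is one of detail: the paper delegates the second Howald-type formula, and with it the identification of $\overline{\bigcup_k \tfrac1k N(\mfa_k)}$ with $P$, to the citation \cite[Proposition 8.4]{JM}, whereas you re-derive it from scratch, including the lattice-point density argument showing $\bigcup_k \tfrac1k \interior N(\mfa_k)=\interior P$ --- which is precisely the step the paper's short proof leaves implicit.
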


 \begin{proof}

  Let $Q$ be the Newton convex body of $\vp_{\mfab}$. If $Q$ is not equal to $P$, then for some sufficiently large $m > 0$, the multiplier ideals  $\JJ(m \vp_{\mfab})$ and $\JJ(m \mfab) $  must be different monomial ideals since we have Howald type theorems (cf.\cite[9.3.27]{L}) for $\JJ(m \vp_{\mfab})$ by (\ref{multi}) and for the asymptotic multiplier ideals $\JJ(m \mfab) $ by \cite[Proposition 8.4]{JM}, respectively. Such difference contradicts Theorem~\ref{asymptotic}.

 \end{proof}

 As a consequence of Proposition~\ref{toric_convex}, we obtain at once so many examples of toric psh functions sharing the same Newton convex body, yet no pair of which is equivalent to each other for at least two reasons. One is from \cite[Theorem 3.5]{K15}, i.e. from different choices of the coefficients $\ep_k$'s.

 Another is from different choices of a sequence of closed rational convex polyhedra $R_1 \subset R_2 \subset \ldots$ and $R'_1 \subset R'_2 \subset \ldots$ as in Figure 1. For reference, we explicitly write out two of those psh functions below again as in \eqref{siu}. The ideal $\mfa'_k$ is the one generated by all monomials whose exponents are contained in $E'_k := k \cdot R'_k$ as in Example~\ref{graded}.

  \begin{equation*}
   \vp_{\mfab} = \log \left( \sum^{\infty}_{k \ge 1}  \ep_k  \abs{\mfa_k}^{\frac{1}{k}} \right)  = \log \left( \sum^{\infty}_{k \ge 1}  \ep_k   \left( \abs{g_1^{(k)}} + \ldots + \abs{g_m^{(k)}}  \right)^{\frac{1}{k}} \right)
 \end{equation*}

  \begin{equation*}\label{siu3}
   \vp'_{\mfab} = \log \left( \sum^{\infty}_{k \ge 1}  \ep'_k  \abs{\mfa'_k}^{\frac{1}{k}} \right)  = \log \left( \sum^{\infty}_{k \ge 1}  \ep'_k   \left( \abs{{g'}_1^{(k)}} + \ldots + \abs{{g'}_{m'}^{(k)}}  \right)^{\frac{1}{k}} \right)
 \end{equation*}

   Since  the multiplier ideals of all these psh functions are determined by the same Newton convex body, they all have the same multiplier ideals $\JJ (m \vp)$ for all $m > 0$, i.e. they are v-equivalent : $\vp_{\mfab}  \sim_v \vp'_{\mfab} $.

\section{Jumping numbers}

\subsection{Definition and basic properties}

 We will define jumping numbers and prove some basic properties. The following definition is possible thanks to the openness theorem of \cite{GZ}.

\begin{definition}
	
 Let $\vp$ be a psh function on a complex manifold $X$. A real number $\al > 0$ is a {\bf jumping number} of $\vp$ at $x \in X$ if the multiplier ideals $\JJ(c \vp )$ are constant at $x$ precisely for $c \in [\al, \al + \delta)$ (for some $\delta > 0$) in the sense that $\JJ(c \vp)_x = \JJ(\al \vp)_x$ for $ c \in [\al , \al + \delta)$ and $ \JJ((\al + \delta) \vp)_x \subsetneq \JJ(\al \vp)_x .$ 		Denote the set of those jumping numbers by $  \jump(\vp )_x$.	

\end{definition}

 One may also consider a global version of the set of jumping numbers for a compact complex manifold $X$ defining  $ \jump(\vp)_X := \displaystyle \bigcup_{x \in X} \jump(\vp)_x $  (cf. \cite{D15}). Note that $\jump(\vp)_x$ and $\jump(\vp)_X$ all satisfy DCC due to the openness theorem (\ref{openness}). Since any set of real numbers satisfying DCC is countable at most, the sets $\jump(\vp)_x$ and $\jump(\vp)_X$ are countable.

 In this paper, we will concentrate on the study of the jumping numbers at a point as in \cite{ELSV}, which then can be applied to study the global version $\jump(\vp)_X$.  As in \cite{D15}, $\jump(\vp)_X$ is necessary when one wants to keep track of all the (possibly non-reduced) subschemes associated to $\JJ(c \vp)$ for all $c > 0$.

 Now for basic properties of jumping numbers, we will first generalize \cite[(1.17) and (5.8)]{ELSV} to general psh functions. Let $\vp$ be a psh function on a complex manifold $X$. Let $x \in X$ be a point where the Lelong number of $\vp$, $\nu (\vp, x) > 0$. Let $c(\vp, x)$ be the log canonical threshold of $\vp$ at $x$.
 	
 	Let $\chi$ be one of the jumping numbers of $\vp$ at $x$ : i.e. $\chi \in \jump(\vp)_x$. By the openness theorem  (\ref{openness}), there exists the `next' jumping number after $\chi$, which we denote by $\chi'$ in the following statement.  	
 	
 \begin{proposition}
 	
 $\chi' \le \chi + c(\vp, x)$.

 \end{proposition}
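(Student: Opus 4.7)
The plan is to adapt the Skoda--Nakayama argument of \cite[Corollary 1.17]{ELSV} to the plurisubharmonic setting. I would proceed by contradiction: suppose $\chi' > \chi + c(\vp, x)$ and choose $c'$ with $c(\vp, x) < c' < \chi' - \chi$. Since $\chi + c' \in [\chi, \chi')$, the constancy of multiplier ideals on this half-open interval gives
\[
\JJ\bigl((\chi + c')\vp\bigr)_x \;=\; \JJ(\chi\vp)_x.
\]

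The core step is then the subadditivity theorem for analytic multiplier ideals of Demailly--Ein--Lazarsfeld (proved via Ohsawa--Takegoshi and available for two arbitrary psh weights; see e.g.\ \cite{D11}), applied to the sum $\chi\vp + c'\vp$:
\[
\JJ\bigl((\chi + c')\vp\bigr) \;\subseteq\; \JJ(\chi\vp)\cdot \JJ(c'\vp).
\]
Together with the previous display and the obvious reverse inclusion, this forces the equality
\[
\JJ(\chi\vp)_x \;=\; \JJ(\chi\vp)_x \cdot \JJ(c'\vp)_x
\]
in the Noetherian local ring $\OO_{X,x}$. By the choice $c' > c(\vp, x)$, the ideal $\JJ(c'\vp)_x$ is proper, hence contained in the maximal ideal $\mathfrak{m}_x$. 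Nakayama's lemma applied to the finitely generated ideal $\JJ(\chi\vp)_x$, which is nonzero because $\vp \not\equiv -\infty$, then yields $\JJ(\chi\vp)_x = 0$, a contradiction.

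I expect the only delicate point to be the appeal to analytic subadditivity in full generality; since this is a well-established result of Demailly--Ein--Lazarsfeld in the analytic setting, the rest of the argument is essentially a commutative-algebra manipulation. As a sanity check, the hypothesis $\nu(\vp, x) > 0$ guarantees $c(\vp, x) < \infty$ via Skoda's estimate, so the inequality is non-vacuous; it also ensures that the jumping numbers of $\vp$ at $x$ grow without bound, so that $\chi'$ is well defined as the minimum of $\{ c \in \jump(\vp)_x : c > \chi \}$ by the DCC property of the jumping-number set noted earlier.
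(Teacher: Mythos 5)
Your proof is correct and follows essentially the same route as the paper: the Demailly--Ein--Lazarsfeld subadditivity theorem applied to $\chi\vp + c'\vp$, the properness of the multiplier ideal beyond the log canonical threshold, and Nakayama's lemma. The only cosmetic difference is that the paper applies subadditivity directly at $\chi + c(\vp,x)$ (using that $\JJ(c(\vp,x)\vp)_x$ is already proper, by openness), whereas you perturb to $c' > c(\vp,x)$; both work.
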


\begin{proof}
	
	It is enough to show that $ \JJ \left( (\chi + c(\vp, x)) \vp \right)_x \neq \JJ(\chi \vp)_x.$  By the subadditivity of multiplier ideals \cite[Theorem 2.6]{DEL}, we have $ \JJ( (\chi + c(\vp, x)) \vp ) \subseteq \JJ( \chi \varphi) \cdot \JJ( c(\vp, x) \vp ). $ Since $\JJ ( c(\vp, x) \vp ) \neq \OO_{X,x}$, the conclusion follows.
	
\end{proof}

Using (\ref{multi}), we next find a relation between the jumping numbers of a toric psh function $\vp$ at the origin and its Newton convex body. Denote by $C(\vp)$, the set of positive real numbers $c$ such that $\partial P(c\vp) \cap \ZZ_{>0}^n$ is not empty.

\begin{proposition}\label{jump_newton}

    Let $\vp$ be a toric psh function on the unit polydisk $\DD^n \subseteq \CC^n$. Then $C(\vp) \subseteq \jump(\vp)_0$ and $\jump(\vp)_0$ is the closure of $C(\vp)$  in $\RR$.

\end{proposition}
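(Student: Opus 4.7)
The plan is to prove both inclusions directly via Proposition~\ref{multi}, together with the scaling identity $P(c\vp)=cP(\vp)$ and the up-closedness $P(\vp)+\RRn\subset P(\vp)$. Throughout I write $Q:=P(\vp)$ and recall that adding a strictly positive vector to any point of $Q$ lands in $\interior Q$ (since for $q\in Q$ and $v\in\RR^n_{>0}$, any small perturbation $v+u$ still has non-negative coordinates, so $q+v+u\in Q+\RRn\subset Q$).

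For $C(\vp)\subseteq\jump(\vp)_0$, fix $c\in C(\vp)$ and choose $b\in\partial P(c\vp)\cap\ZZ^n_{>0}$; then $b/c\in\partial Q$, and Proposition~\ref{multi} gives $z^{b-\mathbf{1}}\notin\JJ(c\vp)$. For small $\epsilon>0$ I write $b/(c-\epsilon)=b/c+\tfrac{\epsilon}{c(c-\epsilon)}b$; since all coordinates of $b$ are positive, the added vector lies in $\RR^n_{>0}$, so the observation above upgrades $b/c\in Q$ to $b/(c-\epsilon)\in\interior Q$. Hence $z^{b-\mathbf{1}}\in\JJ((c-\epsilon)\vp)\setminus\JJ(c\vp)$ for every small $\epsilon>0$, and $c\in\jump(\vp)_0$.

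For $\jump(\vp)_0\subseteq\overline{C(\vp)}$, given $c\in\jump(\vp)_0$ and $\epsilon>0$ I pick $b_\epsilon\in\ZZ^n_{>0}$ with $z^{b_\epsilon-\mathbf{1}}\in\JJ((c-\epsilon)\vp)\setminus\JJ(c\vp)$. Setting $t(b):=\sup\{t>0:b\in\interior(tQ)\}$, the two membership conditions translate to $t(b_\epsilon)\in(c-\epsilon,c]$, and $b_\epsilon/t(b_\epsilon)\in\partial Q$ shows $t(b_\epsilon)\in C(\vp)$. Letting $\epsilon\to 0$ gives $c\in\overline{C(\vp)}$. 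Combined with the previous step, $\overline{\jump(\vp)_0}=\overline{C(\vp)}$, so to upgrade to equality $\jump(\vp)_0=\overline{C(\vp)}$ it suffices to show every limit point of $C(\vp)$ lies in $\jump(\vp)_0$. Left-limits $c_k\nearrow c$ with $c_k\in C(\vp)$ are handled directly using the same up-closedness argument: if $b_k$ realizes $t(b_k)=c_k$, then $b_k/(c-\epsilon)\in\interior Q$ once $c-\epsilon<c_k$, while $b_k/c=b_k/c_k-\tfrac{c-c_k}{cc_k}b_k$ with the subtracted vector in $\RR^n_{>0}$ forces $b_k/c\notin Q$ (else up-closedness would put $b_k/c_k$ in $\interior Q$, contradicting $b_k/c_k\in\partial Q$). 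So each such $b_k$ witnesses $c$ as a jumping number.

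The main anticipated obstacle is excluding a right-only cluster point $c\in\overline{C(\vp)}\setminus C(\vp)$, i.e. handling witnesses $b_k$ with $t(b_k)=c_k\searrow c$. Such $b_k$ must satisfy $|b_k|\to\infty$ (otherwise $b_k$ would be eventually constant and $c_k$ stationary), and a subsequence has $b_k/|b_k|\to u^*$ on the boundary of $\RRn$ with $t(u^*)=0$. I would then perturb $b_k$ along a coordinate $j$ in which it grows, replacing $b_k$ by $b_k-e_j\in\ZZ^n_{>0}$, and argue that $t(b_k-e_j)$ is strictly less than $c_k$ (since $b_k/c_k-e_j/c_k$ falls out of $Q$ at the $j$-th coordinate) while still converging to $c$; this produces fresh elements of $C(\vp)\cap(c-\epsilon,c)$ for every $\epsilon>0$, exhibiting $c$ as a left-limit and hence as a jumping number. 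Making this last perturbation quantitative --- controlling the discrete gradient of the gauge function $t$ at lattice points near the asymptotic direction $u^*$ --- is the main technical difficulty, and is what ultimately rules out right-only cluster points.
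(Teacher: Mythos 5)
Your first two inclusions and your treatment of left-limits are correct, and the gauge-function argument for $\jump(\vp)_0\subseteq\overline{C(\vp)}$ is actually cleaner than the paper's (the paper only shows that the non-cluster jumping numbers lie in $C(\vp)$, leaving implicit both that $\jump(\vp)_0$ is closed and that its isolated points are dense in it). The genuine gap is the final step: showing that a point $c\in\overline{C(\vp)}$ approached by $C(\vp)$ only from the right is still a jumping number. You flag this yourself, and the perturbation you sketch does not close it. Monotonicity of the gauge $t$ in each coordinate only gives $t(b_k-e_j)\le t(b_k)=c_k$, with no lower bound: nothing prevents $t(b_k-e_j)$ from dropping below $c-\epsilon$, or from remaining in $[c,c_k)$, so you do not actually produce points of $C(\vp)\cap(c-\epsilon,c)$. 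Controlling the discrete increments of $t$ at lattice points near the degenerate direction $u^*$ is precisely where the convex geometry gets delicate, and as written this part is a plan rather than a proof.

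The good news is that the whole left/right case analysis is unnecessary. By the openness theorem the set $\jump(\vp)_0$ is closed: if $c$ is not a jumping number, then $c$ lies in the open part of the half-open interval $[a,b)$ on which $d\mapsto\JJ(d\vp)_0$ is constant, so $(a,b)$ is a neighborhood of $c$ containing no jumping numbers and $c$ cannot be a cluster point of $\jump(\vp)_0$. (This is exactly Proposition~\ref{cluster} of the paper, whose proof is independent of the present statement.) Hence $\overline{C(\vp)}\subseteq\overline{\jump(\vp)_0}=\jump(\vp)_0$ follows at once from your first inclusion, with no distinction between approach from the left or the right. With that one substitution your proof is complete, and its treatment of $\jump(\vp)_0\subseteq\overline{C(\vp)}$ via $t(b)$ is somewhat more careful than the published argument.
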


\begin{proof}
    For $c \in C(\vp)$, we can find $A+\mathbf{1} = (a_1 + 1, \ldots, a_n +1) \in \partial P(c\vp) \cap \ZZ_{>0}^n$. Since $A+\mathbf{1} \in \interior P((c- \epsilon)\vp)$ for $0 < \epsilon \ll 1$, $z^A \in \JJ( (c-\epsilon)\vp )$ but $z^A \notin \JJ( c\vp)$. Therefore, $c$ is a jumping number of $\vp$ at $0$.

    For the second statement, it is enough to show that the set of jumping numbers which are not cluster points is contained in $C(\vp)$. If $c>0$ is a jumping number of $\vp$ at $0$ and it is not a cluster point of $\jump(\vp)_0$, then one can find $z^A = z_1^{a_1} z_2^{a_2} \cdots z_n^{a_n} \in \JJ((c-\epsilon)\vp) \backslash \JJ(c \vp)$ for sufficiently small $\epsilon > 0$ (where $A$ is independent of $\epsilon$). We have
	$$A + \mathbf{1} \in \interior P((c-\epsilon)\vp) \quad \text{but} \quad A + \mathbf{1} \notin \interior P(c\vp).$$
	Since the intersection of $\interior P((c-\epsilon)\vp)$ for $0 < \epsilon \ll 1$ is $P(c\vp)$, we have $\partial P(c \vp) \cap \ZZ_{>0}^n \neq \emptyset$.
\end{proof}

 On the other hand, the following generalizes \cite[Lemma 5.12]{ELSV}.

\begin{proposition}\label{isolated}

(1) Let $\vp$ be a psh function with isolated singularities at $p \in X$. Then the set of its jumping numbers at a point, does not have a cluster point.

(2) Let $\vp \in \EE (\Omega)$ be a psh function in the Cegrell class.  Then the set of its jumping numbers at a point $x \in \Omega$, does not have a cluster point.

\end{proposition}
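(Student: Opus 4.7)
The plan is to rule out clustering of jumping numbers from above uniformly by openness, and then deal with clustering from below separately in each case. Openness (Theorem~\ref{openness}) applied with $\psi=c_0\vp$ gives $\bigcup_{\epsilon>0}\JJ((c_0+\epsilon)\vp)_x=\JJ(c_0\vp)_x$, an ascending union of coherent ideals in a Noetherian local ring, so it stabilizes, and jumping numbers cannot accumulate to $c_0$ from above. Hence in both parts one may assume, for contradiction, that $c_1<c_2<\cdots\uparrow c_0$ are jumping numbers of $\vp$ at $x$, producing a strictly decreasing chain
\[
\JJ(c_1\vp)_x \supsetneq \JJ(c_2\vp)_x \supsetneq \JJ(c_3\vp)_x \supsetneq \cdots \supseteq \JJ(c_0\vp)_x,
\]
and the remaining task is to forbid such a chain.

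For part (1), the key observation is that isolated singularity at $p$ forces $\JJ(c\vp)_p$ to be $\mathfrak{m}_p$-primary for every $c>0$: since $\vp$ is locally bounded off $p$, $\JJ(c\vp)$ equals the structure sheaf on a punctured neighborhood of $p$, and coherence then forces some power of the maximal ideal into $\JJ(c\vp)_p$. Equivalently, Skoda's integrability together with the a priori Lelong-number bound $\vp(z)\le\nu(\vp,p)\log|z-p|+O(1)$ valid for isolated psh singularities shows that $|f|^{2}e^{-2c_1\vp}\in L^{1}_{\mathrm{loc}}$ for every $f\in\mathfrak{m}_p^{N}$ with $N$ sufficiently large. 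Consequently $\OO_{X,p}/\JJ(c_1\vp)_p$ is of finite length and cannot accommodate the above infinite strictly decreasing chain, which is the desired contradiction.

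For part (2) the Cegrell-class hypothesis does not force $\JJ(c\vp)_x$ to be cofinite in $\OO_{X,x}$ (already $\log|z_1|\in\EE(\DD^2)$ gives a principal, non-$\mathfrak{m}_0$-primary multiplier ideal), so the colength argument of part (1) breaks down. The approach I would take uses the valuative characterization of multiplier ideals from \cite{BFJ}, \cite{BFFU}: each jumping number $c_k$ of $\vp$ at $x$ is realized by a divisorial valuation $v_k$ centered at $x$ with $c_k=A_x(v_k)/v_k(\vp)$, and the finiteness of the local Monge--Amp\`ere mass $(dd^c\vp)^n$ near $x$---guaranteed by $\vp\in\EE(\Omega)$---combined with Demailly's comparison $(dd^c\vp)^n(\{x\})\ge\nu(\vp,x)^n$ should cap the supply of divisorial valuations with $v(\vp)>0$ in any compact range of $c$, preventing infinite accumulation in any bounded interval. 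The main obstacle is precisely this ``energy--valuation'' bookkeeping: extracting from the global finite-energy hypothesis $\vp\in\EE(\Omega)$ a uniform local bound on the valuations that can contribute jumps in any bounded window of $c$, something that, unlike in (1), is not accessible from a pure Noetherian descending-chain argument and requires genuine pluripotential-theoretic input.
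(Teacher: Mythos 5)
Your part (1) is essentially the paper's argument: the multiplier ideals $\JJ(c\vp)_p$ are $\mathfrak m_p$-primary (the coherence/Nullstellensatz reason you give first is the right one), so the number of distinct ideals $\JJ(a\vp)_p$ with $0<a<\el$ is bounded by the finite colength $\dim \OO_{X,p}/\JJ(\el\vp)_p$, and no cluster point can exist. Two small corrections: the strictly decreasing chain is killed because all its members \emph{contain} the $\mathfrak m_p$-primary ideal $\JJ(c_0\vp)_p$, i.e.\ the chain injects into the Artinian module $\OO_{X,p}/\JJ(c_0\vp)_p$ --- finiteness of the length of $\OO_{X,p}/\JJ(c_1\vp)_p$ (the largest ideal in the chain) is not what bounds it. Also, your ``equivalently'' via Skoda and the bound $\vp(z)\le\nu(\vp,p)\log|z-p|+O(1)$ is backwards: an upper bound on $\vp$ gives a \emph{lower} bound on $e^{-2c\vp}$ and says nothing about integrability; stick with the coherence argument.

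Part (2) contains a genuine gap. You discard the colength argument on the grounds that $\log|z_1|\in\EE(\DD^2)$ has a non-$\mathfrak m$-primary multiplier ideal, but $\log|z_1|$ is \emph{not} in the Cegrell class: $\EE(\Omega)$ coincides (by B{\l}ocki) with the domain of definition of the Monge--Amp\`ere operator, which in dimension $2$ requires $\nabla u\in L^2_{\mathrm{loc}}$, and $\int_{|z_1|<1}|z_1|^{-2}\,dV=\infty$. In fact the paper's proof of (2) is precisely the colength argument you abandoned: by Kim--Rashkovskii \cite[(3.2)]{KR}, for $\vp\in\EE(\Omega)$ the stalk $\JJ(\el\vp)_x$ is either trivial or $\mathfrak m_x$-primary at every point, so part (1)'s bound applies verbatim. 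Your proposed substitute does not work as stated: a general jumping number $c_k$ is \emph{not} of the form $A_x(v_k)/v_k(\vp)$ (that formula describes only the log canonical threshold; higher jumping numbers involve $\inf_v\bigl(A(v)+v(f)\bigr)/v(\vp)$ over germs $f$), and you yourself acknowledge that the ``energy--valuation bookkeeping'' needed to convert finite Monge--Amp\`ere mass into a bound on contributing valuations is an unresolved obstacle. As written, part (2) is not proved.
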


 A psh function in the Cegrell class is the ultimate generalization of a psh function with isolated singularities : see \cite{C04}.

\begin{proof}

 Let $\mathfrak{m}$ be the maximal ideal of the point $p$. For $\el > 0$, the multiplier ideal $\JJ ( \el \vp)$ is $\mathfrak{m}$-primary. Thus the colength of $\JJ (\el \vp)$, $\dim \OO_X \slash \JJ(\el \vp) $ gives an upper bound for the possibilities of different ideals $\JJ (a \vp)$ with $0< a < \el$ since $ \dim \OO_X \slash \JJ(\el \vp) \ge \dim \OO_X \slash \JJ(a \vp) .$

 For (2), the same argument holds since in this case, the stalks of the multiplier ideals $\JJ(\el \vp)$ are either trivial or $\mathfrak{m} = \mathfrak{m}_x$-primary at each point $x \in \Omega$ (see \cite[(3.2)]{KR}).

\end{proof}

\noi We note that this proposition can be used to decide whether a psh function belongs to the Cegrell class or not.

 Now we give some examples of jumping numbers.

\begin{example}\cite{ELSV}\label{5.10}

 	Consider the convex region $P = \{ (x,y) \in \mathbf{R}_{> 0}^2 : (x-1)(y-1) \ge 1 \}.$ Apply the construction of Example~\ref{graded} to obtain a Siu psh function $\varphi_{\mathfrak{a}_\bullet}$ on $\DD^2 \ni (0,0)$. 	By Theorem~\ref{asymptotic}, the jumping numbers of $\varphi_{\mathfrak{a}_\bullet}$ are precisely the jumping numbers of $\mathfrak{a}_\bullet$  given in \cite[Example~5.10]{ELSV} :
\begin{equation}\label{5.10phi}	
	\Jump(\vp_{\mathfrak{a}_\bullet})_{(0,0)} = \left\{ \frac{ef}{e+f} : e,f \ge 1 \right\} .
\end{equation}		
	
\end{example}

 In this example, all positive integers are cluster points of jumping numbers. On the other hand, the following example  illustrates that in general, the jumping numbers of $\vp + \psi$ may not be simply described in terms of the jumping numbers of $\vp$ and $\psi$.

\begin{example}[Jumping numbers of $\JJ( c (\vp + \psi) )$] \label{vp+psi}
    Let $\vp = \vp_{\mfab}$ be as in Example~\ref{5.10} and let $\psi = \log \abs{z_1}$.  The Newton convex body of $c (\vp + \psi)$ is $ P( c(\vp + \psi) ) = c ( P(\vp) + P(\psi) ) $ by Lemma~\ref{Minkowski}. The boundary of $ P( c(\vp + \psi) )$  is given by the equations $(x - 2c)(y - c) = c^2$ and $x > 2c$. Using Proposition~\ref{jump_newton}, we find that the set of jumping numbers for $\vp + \psi$ is given by

    $$ \jump( \vp + \psi; 0)_{(0,0)} = \mathrm{cl} \left(\left\{ \frac{ (p+2q) - \sqrt{p^2 + 4q^2} }{2} : p, q \in \ZZ_{>0} \right\} \right), $$
    where $\mathrm{cl}(A)$ is the closure of a  subset $A \subset \RR$ in $\RR$.
\end{example}

 The following lemma was used in the above example. (A. Rashkovskii informed us that Lemma~\ref{Minkowski} might be already known. In this regard, see Section 8 of \cite{R09}.) 

\begin{lemma}\label{Minkowski}

    Let $\vp$ and $\psi$ be toric psh functions on the polydisk $D(0,r) \subseteq \CC^n$.
    Then the Newton convex body of $\vp+\psi$ is the Minkowski addition of $P(\vp)$ and $P(\psi)$, i.e. $P (\vp + \psi) = P(\vp) + P(\psi)$.
\end{lemma}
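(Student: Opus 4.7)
The plan is to reduce the statement to additivity of support functions under Minkowski sum, via the correspondence between Newton convex bodies and Kiselman indicators used in~\cite{R13b}. For a toric psh function $\vp$ on $\DD^n$, set $\widehat{\vp}(t) := \vp(e^{t_1},\dots,e^{t_n})$ for $t \in \RR^n_{<0}$, and define the indicator
\[
\Psi_\vp(t) := \lim_{s \to +\infty} \frac{\widehat{\vp}(st)}{s} \qquad (t \in \RR^n_{\leq 0}).
\]
After a harmless normalization, convexity of $\widehat{\vp}$ makes $s \mapsto \widehat{\vp}(st)/s$ non-decreasing in $s$, so $\Psi_\vp$ is a well-defined $1$-homogeneous convex function with $\Psi_\vp \leq 0$. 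The key fact I will use (Kiselman--Rashkovskii, see \cite[Sec.~3]{R13b}) is that $\Psi_\vp$ is the lower support function of the Newton convex body on $\RR^n_{\leq 0}$:
\[
\Psi_\vp(t) = \inf_{a \in P(\vp)}\langle a,t\rangle, \qquad P(\vp) = \bigl\{a \in \RRn : \langle a,t\rangle \geq \Psi_\vp(t) \text{ for all } t \in \RR^n_{\leq 0}\bigr\},
\]
and that these maps set up a bijection between closed convex sets $P \subset \RRn$ with $P + \RRn \subseteq P$ and $1$-homogeneous convex functions $\Psi \leq 0$ on $\RR^n_{\leq 0}$.

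Given this setup, I would first check that the indicator is additive in $\vp$. Since $\widehat{\vp+\psi} = \widehat{\vp} + \widehat{\psi}$ and both defining limits exist, the identity $\Psi_{\vp+\psi} = \Psi_\vp + \Psi_\psi$ is immediate. Next I would invoke the elementary Minkowski-sum identity for the lower support function on $\RR^n_{\leq 0}$:
\[
\inf_{a \in P_1 + P_2}\langle a,t\rangle = \inf_{a_1 \in P_1}\langle a_1,t\rangle + \inf_{a_2 \in P_2}\langle a_2,t\rangle,
\]
which follows directly from the definition of infimum over a Minkowski sum. Combining these two identities, the sets $P(\vp+\psi)$ and $P(\vp) + P(\psi)$ have the same lower support function on $\RR^n_{\leq 0}$, so by the bijectivity recalled above they must coincide.

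The main obstacle is a minor issue of closedness: Minkowski sums of unbounded closed convex sets can fail to be closed in general, so one must verify that $P(\vp) + P(\psi)$ is itself closed (otherwise the bijection above applies a priori only to its closure). Here, however, both summands share the same recession cone $\RRn$ and are epigraphs above $\RRn$-translates of convex curves on their boundaries; this structure is stable under Minkowski addition, so the sum is closed. Once this point is settled, the support-function argument above delivers the identity $P(\vp+\psi) = P(\vp)+P(\psi)$ as claimed.
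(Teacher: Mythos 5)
Your route is genuinely different from the paper's: the paper applies H\"ormander's theorem on Legendre transforms of sums (\cite[Theorem 2.2.5]{H}) to the non-homogenized convex functions $f_1,f_2$ and reads off $P(\vp+\psi)$ as the domain of the resulting inf-convolution, whereas you first homogenize (pass to the Kiselman indicator $\Psi_\vp$) and then invoke support-function duality for convex sets. Both are convex-duality arguments, but yours isolates the two additive ingredients (additivity of the indicator under sums of psh functions, additivity of support functions under Minkowski sums) and makes the recession-cone structure explicit; in particular you confront the closedness of $P(\vp)+P(\psi)$ head-on, which the paper instead absorbs into the lower semi-continuous regularization and a follow-up remark. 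Your closedness claim is correct, though the justification is informal; the clean reference is the standard criterion that $C_1+C_2$ is closed whenever $\mathrm{rec}(C_1)\cap(-\mathrm{rec}(C_2))=\{0\}$, which holds here since both recession cones equal $\RRn$.

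There is, however, one concrete error in the central duality identity. As stated, $\Psi_\vp(t)=\inf_{a\in P(\vp)}\langle a,t\rangle$ on $\RR^n_{\leq 0}$ is false: since $P(\vp)+\RRn\subseteq P(\vp)$, for any $t$ with some coordinate strictly negative one can let the corresponding coordinate of $a$ tend to $+\infty$, so the infimum is identically $-\infty$ on $\RR^n_{<0}$, and the ``bijection'' you rely on collapses. (Check against $\vp=\log\abs{z_1}$ in $\CC^2$, where $P(\vp)=(1,0)+\RR^2_{\geq 0}$ and $\Psi_\vp(t)=t_1$.) The correct statement is the supremum version, $\Psi_\vp(t)=\sup_{a\in P(\vp)}\langle a,t\rangle$ for $t\in\RR^n_{\leq 0}$, i.e.\ $\Psi_\vp$ is the genuine support function of $P(\vp)$ restricted to the polar cone of $\RRn$ (this is consistent with $\Psi_\vp$ being convex and $\leq 0$, as a supremum of linear functionals that are nonpositive there). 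With $\sup$ in place of $\inf$ throughout, every step of your argument survives verbatim: the Minkowski-sum identity for support functions is equally elementary with $\sup$, and closed convex sets with recession cone containing $\RRn$ are determined by their support functions on $\RR^n_{\leq 0}$. So the proof is repairable by a uniform sign fix, but as written the key identity fails.
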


\begin{proof}
    Let $f_1$ and $f_2$ be convex functions on $(-\infty, \log{r})^n$, non-decreasing in each variable, associated to $\vp$ and $\psi$ respectively. We may assume that $f_1$ and $f_2$ are defined on $\RR^n$ by putting $f_1(x) = f_2(x) = \infty$ whenever $x \notin (-\infty, \log{r})^n$. Also replacing $f_1$ and $f_2$ by their lower semi-continuous regularizations, we may assume that $f_1$ and $f_2$ are lower semi-continuous on $\RR^n$. Let $\tilde{f}_1$, $\tilde{f}_2$ and $\widetilde{f_1+f_2}$ be the Legendre transforms of $f_1$, $f_2$ and $f_1+f_2$. Then by \cite[Theorem 2.2.5]{H}, $\widetilde{f_1+f_2}$ is the lower semi-continuous regularization of
    $$ g(y) = \inf_{y_1 + y_2 = y}{ \left( \tilde{f}_1(y_1) + \tilde{f}_2(y_2) \right) }, $$
    which implies that $y \in P(\vp + \psi)$ if and only if there exist $y_1 \in P(\vp)$ and $y_2 \in P(\psi)$ such that $y_1 + y_2 = y$.
\end{proof}

\begin{remark1}
In the proof of Lemma~\ref{Minkowski}, we replaced the Newton convex bodies by their interiors so that the lower semi-continuous regularizations of $f_1$, $f_2$ and $g$ are identically equal to the original functions on their Newton convex bodies. This does not affect the conclusion since the lower semi-continuous regularization changes values only on the boundary of the Newton convex bodies.
\end{remark1}

\subsection{``Mixed" jumping numbers }

 It is also very natural to consider the following more general type of jumping numbers arising from the multiplier ideals $\JJ(c \vp + \psi)$ for psh functions $\vp$ and $\psi$. In particular, this type of jumping numbers is used in a recent work of Demailly\cite{D15} on $L^2$ extension theorems from non-reduced subschemes defined by multiplier ideal sheaves. These jumping numbers play an important role there to keep track of all the non-reduced subschemes defined by  $\JJ(c \vp + \psi)$ for all $c > 0$.

\begin{definition}
	
 A real number $\al > 0$ is a {\bf jumping number of $\vp$ at $x$ with respect to $\psi$} if the multiplier ideals $\JJ(c \vp + \psi)$ are constant at $x$ precisely for $c \in [\al, \al + \delta)$ for some $\delta > 0$. 	Denote the set of them by $   \Jump(\vp ; \psi)_x$.	
 \end{definition}

\noi  Note that $ \Jump(\vp ; \psi)_x = \Jump(\vp)_x $ when $\psi$ is locally bounded, according to our previous notation.

 In general, the jumping numbers of $\JJ(c\vp)$ and those of $\JJ(c\vp + \psi)$ look quite different from each other as seen already in the algebraic case :  Let $D$ and $F$ are effective divisors on $X$. Let $f: X' \to X$ be a log-resolution of $(X, D+ F)$. Let $f^* D = \sum r_i E_i$, $f^* F = \sum s_i E_i$ and $K_{X'/X} := K_{X'} - f^*(K_X) = \sum b_i E_i$ (i.e. define the coefficients $r_i, s_i, b_i$ by these equalities). 

 Then the jumping numbers of $(X,cD)$ are contained in $$ A:= \left\{ \frac{b_j + m}{r_j} \right\}$$ (where $m \ge 1$ are integers).  On the other hand, the jumping numbers of $(X,cD + F)$ are contained in

 $$ B:= \left\{ \frac{b_j + m - s_j}{r_j} \right\} .$$

In other words, these sets of numbers are candidate jumping numbers and not all of them are actual jumping numbers (see e.g. \cite{ST07}). In any case, those two sets $A$ and $B$ are rather different set of rational numbers except when $\frac{s_j}{r_j}$ is constant for every $j$, in which case the one set of jumping numbers are translates of the other.

 For general psh functions with not necessarily analytic singularities \cite[(1.10)]{D11}, $s_j$ and $r_j$ are generalized as the values of divisorial valuations of the psh functions.  Thus we are lead to the following proposition.

 \begin{proposition}
	
 Suppose that psh functions $r \vp$ and $\psi$ are v-equivalent for some constant $r >0$. Then the elements of  $\Jump(\vp; \psi)_x$ are `translates' by $-r$ of the elements in $\Jump(\vp ; 0)_x$.
	
\end{proposition}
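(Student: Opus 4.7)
The plan is to apply the valuative characterization of v-equivalence (the equivalence of conditions (1) and (2) in Section~2.1) and reduce the statement to showing $c\vp + \psi \sim_v (c+r)\vp$ for every $c > 0$. The key tool is the linearity of divisorial valuations on sums of plurisubharmonic functions: for every divisorial valuation $v$ centered on $X$ and any psh functions $\phi_1, \phi_2$ one has $v(\phi_1 + \phi_2) = v(\phi_1) + v(\phi_2)$, since $v$ is a generic Lelong number along an exceptional divisor of some modification, and generic Lelong numbers are additive in the weight.

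From the hypothesis $r\vp \sim_v \psi$ together with condition (2) of v-equivalence, we get $v(\psi) = v(r\vp) = r\cdot v(\vp)$ for every divisorial valuation $v$ centered on $X$. Then for any $c > 0$,
\[
v(c\vp + \psi) \;=\; c\,v(\vp) + v(\psi) \;=\; c\,v(\vp) + r\,v(\vp) \;=\; (c+r)\,v(\vp) \;=\; v((c+r)\vp).
\]
Hence $c\vp + \psi \sim_v (c+r)\vp$ for every $c > 0$, and applying condition (1) of v-equivalence (which rests on \cite{BFJ} together with the openness theorem~\ref{openness}) yields the ideal sheaf identity $\JJ(c\vp + \psi) = \JJ((c+r)\vp)$ for all $c > 0$.

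From this identity, a real number $\al > 0$ lies in $\Jump(\vp;\psi)_x$ if and only if the stalks $\JJ(c\vp + \psi)_x$ strictly decrease at $c = \al$, if and only if $\JJ((c+r)\vp)_x$ strictly decreases at $c+r = \al + r$, if and only if $\al + r \in \Jump(\vp;0)_x$. In other words $\Jump(\vp;\psi)_x = \bigl(\Jump(\vp;0)_x - r\bigr) \cap \RR_{>0}$, which is the claimed translation by $-r$. The main point requiring care is the linearity of divisorial valuations on psh functions (standard via the machinery of \cite{BFJ}, but worth pointing out explicitly), and one should note that jumping numbers of $\Jump(\vp;0)_x$ lying in $(0, r]$ do not contribute to $\Jump(\vp;\psi)_x$; this is the reason for the quotation marks around `translates' in the statement.
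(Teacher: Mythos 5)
Your proof is correct and follows essentially the same route as the paper: the paper's own (very terse) argument simply asserts $(m+r)\vp \sim_v m\vp + \psi$ and then invokes \cite{BFJ} and \cite{GZ} to conclude $\JJ((m+r)\vp) = \JJ(m\vp+\psi)$, which is exactly what you establish, just with the additivity of divisorial valuations on sums of psh functions spelled out. Your closing remark about jumping numbers in $(0,r]$ and the meaning of `translates' is a useful clarification the paper leaves implicit.
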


\begin{proof}
	
	We have $ (m + r) \vp \sim_v m \vp + \psi$.  Then for the multiplier ideals, we get (by \cite{BFJ}, \cite{GZ}) $\JJ( (m+r) \vp) = \JJ( m\vp + \psi)$ for every real $m  > 0$.

\end{proof}

For mixed jumping numbers, we first record the following example where $\jump(\vp;0)_x$ and $\jump(\vp;\psi)_x$ are equal while (multiples of) $\vp$ and $\psi$ are not v-equivalent.

\begin{example}[Jumping numbers of $\JJ(\psi + c \vp)$] \label{rel_jumping}

Let $\vp = \vp_{\mfab}$ on $\CC^2$ be as in Example~\ref{5.10}. Let $\psi = \log{\abs{z_1}}$. Since $\psi + c\vp$ is toric psh, we can use Proposition~\ref{multi} to determine $\JJ(\psi + c\vp)$. The Newton convex body of $\psi + c\vp$ is given by

$$ P(\psi + c\vp) = \{ (x,y) \in \RR_{\ge 0}^2 : (x-1-c)(y-c) \ge c^2, x \ge 1+c \}.  $$

\noi By Proposition~\ref{jump_newton}, a positive real number $c$ is an element of $\jump(\vp;\psi)_{(0,0)}$ if and only if there exist two positive integers $p$ and $q$ such that  $(p,q)$ is on the boundary of $P(\psi + c \vp)$. Therefore we have
$$ \jump(\vp;\psi)_{(0,0)} = \left\{ \frac{ ef }{ e+f } : e, f \ge 1 \right\} = \jump(\vp; 0)_{(0,0)} . $$

\end{example}

 In the following example, $\jump(\vp;\psi)_x$ are translates by $-1$ of $\jump(\vp; 0)_x$ while (multiples of) $\vp$ and $\psi$ are not v-equivalent. Note that v-equivalence of toric psh functions is determined by the Newon convex bodies due to (\ref{multi}).

\begin{example}[Jumping numbers of $\JJ(\psi + c \vp)$] \label{rel_jumping2}
Consider two convex sets $P_1$ and $P_2$ given by
$$ P_1 = \{ (x,y) \in \RR_{\ge 0}^2 : xy \ge 1 \}, \quad P_2 = \{ (x,y) \in \RR_{> 0}^2  : (x-1)(y-1) \ge 1 \}. $$

\noi From Proposition~\ref{toric_convex}, there exist psh functions $\vp$ and $\psi$ such that $P(\vp) = P_1$ and $P(\psi) = P_2$. Then we have
$$\Jump(\vp)_{(0,0)} = \left\{ \sqrt{k} : k \in \ZZ_{>0} \right\} \quad \text{and} \quad \Jump(\psi)_{(0,0)} = \left\{ \frac{ef}{e+f} : e,f \in \ZZ_{>0} \right\} .$$

\noi By Lemma~\ref{Minkowski}, we have
$$ P(\psi + c \vp) = \{ (x,y) \in \RR_{\ge 0}^2 : (x-1)(y-1) \ge (c + 1 )^2 \} $$
for $c > 0$. Therefore we obtain
$$ \jump(\vp;\psi)_{(0,0)} = \left\{ \sqrt{k} - 1 : k \in \ZZ_{>0}, k \ge 2 \right\}.$$

\end{example}

 \section{Periodicity of jumping numbers fails}

\qa In \cite[Proposition A, Proposition 1.12]{ELSV}, they showed that in the algebraic case, jumping numbers have a periodic behavior in the algebraic case. In \cite{ELSV}, a specific definition of periodicity was not given. The following is a general definition of periodicity which generalize the phenomenon of \cite{ELSV} Proposition 1.12 and Proposition A for general psh functions.

 \begin{definition}

 The set of jumping numbers $J := \Jump(\vp; \psi)_x$ (or more generally, a set of nonnegative real numbers) is said to have a \textbf{period} $c>0 \; (c \in \RR)$ if the following holds:  for every $\al \in J$, there exists an integer $m_0 = m_0 (\al, J) \ge 1$ such that $m \ge m_0$ implies $\al + m c \in J$.
 	
 \end{definition}

Note that there can be possibly more than one period for $\Jump(\vp; \psi)_x$ as easily seen from the following example which is a psh version of \cite[Example 1.9, Example 5.1 (iii)]{ELSV} : \textit{diagonal ideals}.

\begin{example}\label{diagonal}

	For positive real numbers $m_1, \ldots, m_n$, consider the psh function $\varphi = \log{(|z_1|^{m_1} + \cdots + |z_n|^{m_n})}. $ The jumping numbers of $\varphi$ at the origin $0 \in \CC^n$ consist of
	
	$$\Jump (\vp)_ 0 = \left\{ \frac{e_1 + 1}{m_1} + \cdots + \frac{e_n + 1}{m_n} : e_1, \ldots , e_n \in \ZZ_{\ge 0} \right\}.$$

\end{example}
\qa
\\

  Now we will show by an example that the periodicity of jumping numbers does not hold for general psh functions. Takayuki Koike~\cite{Ko15} gave an example of a psh function $\vp$ with the following set of jumping numbers at a point. The psh function $\vp$ appears as a local weight function of a singular hermitian metric for a holomorphic line bundle  on some smooth projective variety $X$ (given by \cite{N04}) of dimension $4$ (see \cite[Theorem 1.1]{Ko15}).

\begin{example}\label{koike}

 Let $x \in X$ and $\vp$ be as in \cite[Theorem 1.1]{Ko15} so that

 $$ \Jump(\vp)_x = \left\{ \frac{1}{2} ( p + \sqrt{2a^2 p^2 - q^2} ): p, q \in \ZZ, p > q \ge 0, \; p \equiv q \mod 2 \right\} $$

\noi where $a \ge 2$ is a fixed integer.


\end{example}

 It was mentioned in \cite[p.299]{Ko15} that ``it seems difficult to expect the periodicity property" for this example. We confirm that it is indeed the case.

\begin{proposition}

  For the psh function $\vp$ at $x$ in Example~\ref{koike}, the set of jumping numbers $ \Jump(\vp)_x$  does not have a period.

\end{proposition}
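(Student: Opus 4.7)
The plan is to suppose for contradiction that some $c > 0$ is a period of $\Jump(\vp)_x$, and to derive a contradiction in two stages: a Galois-theoretic rigidity argument first confines $c$ to $\QQ$, and an asymptotic inequality then rules out that remaining case. The starting observation is that every element $\al_{p,q} := \frac{1}{2}\bigl(p + \sqrt{2a^2 p^2 - q^2}\bigr)$ of $\Jump(\vp)_x$ satisfies the quadratic $4x^2 - 4px + (1 - 2a^2)p^2 + q^2 = 0$, and so has degree at most $2$ over $\QQ$.

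The key rigidity ingredient is the following elementary fact: if $\al, c \in \overline{\QQ}$ and $\al + m_1 c$, $\al + m_2 c$ both lie in a quadratic extension $K$ of $\QQ$ for some $m_1 \ne m_2$, then both $\al$ and $c$ lie in $K$, by writing $c = ((\al + m_2 c) - (\al + m_1 c))/(m_2 - m_1)$ and then $\al = (\al + m_1 c) - m_1 c$. Because the finite extension $\QQ(\al, c)/\QQ$ has only finitely many intermediate fields, the hypothesis that $\al + mc \in \Jump(\vp)_x$ for all $m \ge m_0$ makes the elements $\al + mc$ fall into only finitely many quadratic subfields of $\QQ(\al, c)$; pigeonhole then forces both $\al$ and $c$ to lie in one such quadratic extension of $\QQ$.

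I would apply this rigidity twice. Taking $\al = \al_{2,0} = 1 + a\sqrt{2} \in \Jump(\vp)_x$ yields $c \in \QQ(\sqrt{2})$, the unique quadratic field containing $\al_{2,0}$. Taking $\al = \al_{3,1} = \frac{1}{2}(3 + \sqrt{18a^2 - 1}) \in \Jump(\vp)_x$ (valid since $3 > 1 \ge 0$ and both are odd) yields $c \in \QQ(\sqrt{18a^2 - 1})$. Since $18a^2 - 1$ is an odd integer, it cannot equal $2t^2$ for any rational $t$; hence these two real quadratic fields are distinct, their intersection is $\QQ$, and we conclude $c \in \QQ$.

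Finally, with $c \in \QQ_{>0}$ and $\al_0 = 1 + a\sqrt{2}$, I suppose $\al_0 + mc = \al_{p_m, q_m}$ with $D_m := 2a^2 p_m^2 - q_m^2 \ge 0$. Rewriting $2 + 2mc + 2a\sqrt{2} - p_m = \sqrt{D_m}$ and squaring, the coefficient of $\sqrt{2}$ on the right must vanish since $D_m \in \ZZ$ and $\sqrt{2}$ is irrational; this forces $p_m = 2 + 2mc$ and therefore $D_m = 8a^2$, so $q_m^2 = 2a^2(p_m^2 - 4)$. Then $(q_m/p_m)^2 \to 2a^2 > 1$ as $m \to \infty$, so $q_m > p_m$ for all sufficiently large $m$, contradicting the requirement $q_m < p_m$ in the definition of $\Jump(\vp)_x$. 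The main obstacle in the whole plan is locating the right pair of starting elements $\al_{2,0}, \al_{3,1}$ whose associated quadratic fields have intersection $\QQ$; once this choice is made, the Galois rigidity lemma and the single asymptotic estimate do all the remaining work with no case analysis on $c$.
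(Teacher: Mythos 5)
Your argument is correct, and it takes a genuinely different route from the paper's. The paper also proceeds in two stages (first force $c\in\QQ$, then contradict), but both of its stages rest on the elementary observation that if $\sqrt{A}-\sqrt{B}$ is an integer for integers $A,B$ then $\sqrt{A}$ and $\sqrt{B}$ are themselves integers: for the first stage it manufactures an \emph{integer} element $\alpha=z+a(x+y)$ of $2\Jump(\vp)_x$ from a Pythagorean triple $x^2+y^2=z^2$ (chosen with $k/n$ near $\sqrt2$ and $k$ even so that $p>q$ and $p\equiv q \bmod 2$), and compares $\alpha$, $\alpha+mc$, $\alpha+2mc$ to conclude $c\in\QQ$; for the second stage it takes an irrational element $\beta$ and notes that $\beta+(mn)c\in J$ with $nc\in\ZZ$ forces $\beta$ to be an integer. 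Your Galois/pigeonhole rigidity replaces the Pythagorean-triple construction entirely (and would work verbatim for any DCC set whose elements have uniformly bounded degree over $\QQ$), and your asymptotic estimate $q_m/p_m\to a\sqrt2>1$ versus the constraint $p_m>q_m$ replaces the square-root integrality lemma in the endgame. One small point you should make explicit: the second application of the rigidity lemma needs $\al_{3,1}$ to be \emph{irrational}; if $18a^2-1$ were a perfect square then $\QQ(\sqrt{18a^2-1})=\QQ$ and the pigeonhole would only place $c$ in some unidentified quadratic field. This is easily supplied: $18a^2-1\equiv 8 \pmod 9$, and $8$ is not a square modulo $9$, so $18a^2-1$ is never a perfect square. (You should also record, before invoking finiteness of intermediate fields of $\QQ(\al,c)$, that $c$ is algebraic — immediate since $c$ is a rational multiple of a difference of two degree-$\le 2$ elements of the set.) With these two sentences added, the proof is complete.
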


\begin{proof}

 It suffices to show that $J:= 2 \Jump (\vp)_x$ has no periods. Assume that $J$ has a period $c$. Then there exist positive integers $\alpha$ and $m$ such that $\alpha$, $\alpha + mc$ and $\alpha + 2mc$ are in $J$ since we can take $$\alpha      = p_1 + \sqrt{2a^2 p_1^2 - q_1^2} $$ where   $p_1 = z$ and $q_1 = a\abs{x-y}$ for some $x,y,z \in \ZZ_{>0}$ satisfying  $x^2 + y^2 = z^2$. Since such a Pythagorean triple $(x,y,z)$ can be written as
    $$ x = 2nk + k^2, \quad y = 2n^2 + 2nk, \quad z = 2n^2 + 2nk + k^2 $$
    for positive integers $k, n$, we have
    $$ \frac{z}{|x-y|} = \frac{ \left( \frac{k}{n} \right)^2 + \frac{2k}{n} + 2 }{ \left| \left( \frac{k}{n} \right)^2 - 2 \right| }. $$

\noi    Therefore we get $p_1 > q_1$ when  we take $\frac{k}{n}$ sufficiently close to $\sqrt{2}$. Also  $p_1 \equiv q_1 (\textrm{mod }2)$ holds when we take $k$ even. Then by the definition of $J$, we can write
    \begin{align*}
        \alpha + mc &= p_2 + \sqrt{2a^2 p_2^2 - q_2^2}, \\
        \alpha + 2mc &= p_3 + \sqrt{2a^2 p_3^2 - q_3^2}
    \end{align*}
    for some $p_i, q_i$. Thus we have
    \begin{equation} \label{Koike1}
        2p_2 -p_3 - \alpha
        = \sqrt{ 2a^2 p_3^2 - q_3^2 } - 2\sqrt{ 2a^2 p_2^2 - q_2^2 }.
    \end{equation}
    Since the left hand side of (\ref{Koike1}) is an integer, the right hand side is also an integer.  Observe that if $\sqrt{a} - \sqrt{b}$ is an integer for two integers $a$ and $b$, then both $\sqrt{a}$ and $\sqrt{b}$ are integers. With this observation,
    $
        \sqrt{ 2a^2 p_3^2 - q_3^2 }\ \ \text{and} \ \
        2\sqrt{ 2a^2 p_2^2 - q_2^2 }
    $
    are also integers, which implies that $c$ is rational.

    Now take a positive integer $n$ such that $nc$ is an integer.
    Let  $\beta \in J$ be irrational. There exists a positive
    integer $m_0$ such that $\beta + mc \in J$ for all $m \ge m_0$.
    For $m \ge m_0$, we have $\beta + (mn)c \in J$.
    Hence for some $p,\ q,\ r,\ s$, we have
    \begin{align*}
        \beta &= p + \sqrt{ 2a^2 p^2 - q^2 }, \\
        \beta + (mn)c &= r + \sqrt{ 2a^2 r^2 - s^2 }.
    \end{align*}
    This implies that $\sqrt{ 2a^2 p ^2 - q^2 } - \sqrt{ 2a^2 r^2 - s^2 }$ is
    an integer and thus
    $\sqrt{ 2a^2 p ^2 - q^2 }$ and  $\sqrt{ 2a^2 r^2 - s^2} $
    are integers as well.
    This contradicts to irrationality of $\beta$. Hence  the period $c$ cannot exist.
\end{proof}

 Before closing this section, let us consider again Example~\ref{5.10}. If a period $c$ exists in Example~\ref{5.10}, it must be an integer. A possible way to show that a period cannot exist in this case is as follows:  consider
 jumping numbers  $\frac{e}{e+1}$ converging to $1$. If a period $c$ exists, $\frac{e}{e+1} + c$ must be all in the set of jumping numbers. They are all having difference $\frac{e}{e+1}$ from the integer $c$. It reduces to the following arithmetic question whose negative answer will imply that a period cannot exist in this case.

\begin{question}

 Is it true that for every integer $e , c \ge 1$, one can find integers $r, s \ge 1$ such that the following holds $$ \frac{e}{e+1} + c = \frac{rs}{r+s} .$$

\end{question}

\section{Cluster points of jumping numbers }

 In this section, we will study the `bad' behaviour of jumping numbers in the general psh case given by cluster points. Recently Guan and Li \cite{GL} gave the following example of a psh function with non-analytic singularities such that it has a cluster point of jumping numbers:

\begin{equation}\label{glpsh}
 \vp(z_1, z_2) = \log \abs{z_1} + \sum^{\infty}_{k=1} a_k \log ( \abs{z_1} + \abs{\frac{z_2}{k}}^{b_k} )  =: \log \abs{z_1}  + \psi
 \end{equation}

 \noi which is toric psh, where  $a_k = M^{-k}, b_k = M^{2k} \quad (M \ge 2) $.

 \begin{theorem}[Guan and Li]\cite[Theorem 1.1]{GL}\label{guanli}

 In the set of jumping numbers $\jump(\vp)_0$ at the origin,  there exists a sequence $(c_k)_{k \ge 1}$ converging to $1$, i.e. $1$ is a cluster point of jumping numbers.

 \end{theorem}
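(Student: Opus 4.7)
Since $\vp$ is toric psh on $\DD^2$, the approach is to apply Proposition~\ref{jump_newton}: the set $\jump(\vp)_0$ is the closure of $C(\vp)$, the set of $c > 0$ such that $c\,\partial P(\vp) = \partial P(c\vp)$ contains a lattice point in $\ZZ_{>0}^2$. The plan therefore splits naturally into first determining $P(\vp)$, then exhibiting lattice points on scalings of $\partial P(\vp)$ that produce a sequence of jumping numbers accumulating at $1$.

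To compute $P(\vp)$, write $\eta_k := \log(|z_1| + |z_2/k|^{b_k})$, so that $\vp = \log|z_1| + \sum_{k \ge 1} a_k \eta_k$. One checks directly that $P(\log|z_1|) = \{x \ge 1,\; y \ge 0\}$ and $P(\eta_k) = \{(x,y) \in \RR^2_{\ge 0} : x + y/b_k \ge 1\}$; the constant $k^{-b_k}$ inside the logarithm does not change $P(\eta_k)$, because the resulting toric psh function has the same Lelong numbers in every direction. Extending Lemma~\ref{Minkowski} to the infinite sum by first applying it to the truncations $\vp_N = \log|z_1| + \sum_{k \le N} a_k \eta_k$ and then passing to the limit (equivalently, by a direct computation of every Lelong number of $\vp$), the Newton convex body becomes
\[
P(\vp) \;=\; P(\log|z_1|) \;+\; \sum_{k \ge 1} a_k\, P(\eta_k),
\]
whose support function on the first quadrant equals $\nu_1 + \sum_k a_k \min(\nu_1, b_k \nu_2)$. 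In the angular sector $\nu_1/\nu_2 \in (b_K, b_{K+1}]$ (with $K \ge 0$ and the convention $b_0 = 0$) this support function is linear in $\nu$, equal to $A_K \nu_1 + B_K \nu_2$ with
\[
A_K \;=\; 1 + \frac{M^{-K}}{M-1}, \qquad B_K \;=\; \frac{M^{K+1} - M}{M-1}.
\]
Hence the lower-left boundary of $P(\vp)$ is the polyline through the corners $(A_K, B_K)$, $K = 0, 1, 2, \dots$, joined by segments of slope $-b_{K+1}$; since $A_K \searrow 1$ and $B_K \nearrow \infty$, it asymptotes to the vertical line $x = 1$.

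For each large $K$, I then search for lattice points of the form $(1, q) \in \ZZ_{>0}^2$ on the $K$-th scaled edge, from $(cA_K, cB_K)$ to $(cA_{K+1}, cB_{K+1})$. This edge satisfies $b_{K+1} x + y = c(A_K b_{K+1} + B_K)$, so imposing $x = 1$ gives
\[
c \;=\; \frac{q + b_{K+1}}{A_K b_{K+1} + B_K},
\]
and the requirement $c \in [1/A_K, 1/A_{K+1}]$ (that $(1,q)$ actually lies on the edge, not past its endpoints) is equivalent to $q \in [B_K/A_K,\; B_{K+1}/A_{K+1}]$. The length of this admissible interval for $q$ is $B_{K+1}/A_{K+1} - B_K/A_K$, which grows like $M^{K+1}$ as $K \to \infty$ and therefore contains a positive integer for every large $K$. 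Picking such $q = q_K$ yields $c_K \in C(\vp) \cap [1/A_K, 1/A_{K+1}]$. Since $1/A_K \to 1^-$, the sequence $c_K$ converges to $1$, proving that $1$ is a cluster point of $\jump(\vp)_0$.

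The main technical obstacle is the rigorous justification of Minkowski additivity for the infinite sum. The cleanest route is probably to stay on the convex function side throughout: write $\widehat\vp(u_1,u_2) = u_1 + \sum_k a_k \log(e^{u_1} + e^{b_k(u_2 - \log k)})$, verify that this equals $u_1 + \sum_k a_k \max(u_1,\, b_k u_2 - b_k \log k) + O(1)$ with globally bounded error (by the elementary inequality $\max(a,b) \le \log(e^a + e^b) \le \max(a,b) + \log 2$ and $\sum_k a_k \log 2 < \infty$), and then compute every Lelong number $\nu(\vp;(\alpha,\beta)) = -\lim_{t \to \infty} \widehat\vp(-t\alpha, -t\beta)/t$ by identifying the transition index $k^*(\alpha/\beta) = \min\{k \ge 1 : b_k \ge \alpha/\beta\}$. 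The resulting indicator function is dual to $P(\vp)$ and recovers the polyline boundary described above.
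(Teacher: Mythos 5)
Your argument is correct, but be aware that the paper does not actually prove this statement: Theorem~\ref{guanli} is quoted from Guan--Li \cite[Theorem 1.1]{GL} as an external input, so there is no internal proof to match. What you have written is an independent derivation using the paper's own toolbox (Proposition~\ref{multi}, Proposition~\ref{jump_newton}, Newton convex bodies), and it in fact recovers and sharpens what the authors only establish later: your computation exhibits the vertices $(A_K,B_K)$ with $A_K\searrow 1$ and edge slopes $-b_{K+1}$, hence $x_0=1$ with the vertical asymptote $x=1$ disjoint from $P(\vp)$, which is exactly case (d) of Figure 2 and, via Theorem~\ref{cluster_asymptote}, gives the full conclusion of Corollary~\ref{gl1} that the cluster points are precisely $\ZZ_{>0}$. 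Your explicit sequence $(c_K)$ is a concrete instance of the ``conversely'' direction of the proof of Theorem~\ref{cluster_asymptote}. The one step that genuinely needs care is the one you flag yourself: Lemma~\ref{Minkowski} is stated for two summands, and the discrepancy between $\vp$ and the truncations $\vp_N$, or between $\vp$ and the version without the constants $k^{-b_k}$, is \emph{unbounded} (since $\sum_k a_k b_k\log k=\sum_k M^k\log k=\infty$), so one cannot reduce to a finite Minkowski sum by a bounded perturbation; indeed this unboundedness is the whole point of the Guan--Li example. Your proposed remedy --- computing every directional Lelong number as $\nu_1+\sum_k a_k\min(\nu_1,b_k\nu_2)$ directly from $\widehat\vp$, using $0\le\log(e^a+e^b)-\max(a,b)\le\log 2$ together with dominated convergence in $k$ (each term is squeezed between $0$ and $a_k\nu_1$) --- is the right one and closes the gap, since for toric psh functions the indicator determined by these directional limits is dual to $P(\vp)$ by \cite{R13b}. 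With that in place, the verification that $q\in[B_K/A_K,\,B_{K+1}/A_{K+1}]$ contains a positive integer for large $K$ (the interval has length $\sim M^{K+1}$) and the conclusion $c_K\to 1^-$, $c_K\in C(\vp)\subseteq\jump(\vp)_0$, are all correct.
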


  On the other hand, a cluster point of jumping numbers was known in the context of graded system of ideals from \cite[Example 5.10]{ELSV}. Thanks to Theorem~\ref{asymptotic}, this gives another example of a psh function with a cluster point of jumping numbers as noted in Example~\ref{5.10}.

  In this section, we will completely generalize these two particular examples in Theorem~\ref{cluster_asymptote}.

   \subsection{Cluster points of jumping numbers for toric psh functions}

 We first note the following general property.

\begin{proposition}\label{cluster}
	
Let $c$ be a cluster point of the set of jumping numbers $\Jump(\vp ; \psi)_x$. Then $c$ itself is also a jumping number in $\Jump(\vp; \psi)_x$.
	
\end{proposition}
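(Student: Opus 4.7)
The plan is to verify the two defining conditions of a jumping number at $c$: first, that $\JJ(t\vp + \psi)_x$ is constant for $t \in [c, c+\delta)$ for some $\delta > 0$, and second, that $\JJ((c-\ep)\vp + \psi)_x \supsetneq \JJ(c\vp + \psi)_x$ for every sufficiently small $\ep > 0$.

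For the first condition, I would apply the openness theorem (Theorem~\ref{openness}) with the role of $\psi$ there played by $c\vp + \psi$ and the role of $\vp$ unchanged. This yields
\[ \JJ(c\vp + \psi) = \bigcup_{\ep > 0} \JJ((c+\ep)\vp + \psi). \]
The family on the right is an ascending chain of coherent ideal sheaves as $\ep \to 0^{+}$ (multiplier ideals shrink as the weight becomes more singular), so by Noetherianity of the local ring at $x$ the chain stabilizes, producing $\delta > 0$ with $\JJ((c+\ep)\vp + \psi)_x = \JJ(c\vp + \psi)_x$ for all $\ep \in [0, \delta)$.

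For the second condition, I would first recall that $\Jump(\vp;\psi)_x$ satisfies DCC (another consequence of openness, as noted after the definition of jumping numbers), so a cluster point $c$ can be approached only strictly from below. Pick a strictly increasing sequence $c_k \in \Jump(\vp;\psi)_x$ with $c_k \nearrow c$. For each $k$, the jumping property at $c_k$ provides some $\eta_k > 0$ with $\JJ((c_k + \eta_k)\vp + \psi)_x \subsetneq \JJ(c_k\vp + \psi)_x$; shrinking $\eta_k$ so that $c_k + \eta_k \leq c$ and using monotonicity of multiplier ideals in the coefficient of $\vp$, I get
\[ \JJ(c\vp + \psi)_x \subseteq \JJ((c_k + \eta_k)\vp + \psi)_x \subsetneq \JJ(c_k\vp + \psi)_x. \]
Given any $\ep > 0$, choose $k$ large enough that $c - \ep < c_k < c$; monotonicity then yields $\JJ((c-\ep)\vp + \psi)_x \supseteq \JJ(c_k\vp + \psi)_x \supsetneq \JJ(c\vp + \psi)_x$, as required.

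Combining the two conditions places $c$ in $\Jump(\vp;\psi)_x$. There is no serious obstacle: the argument is essentially bookkeeping built on openness, monotonicity, and Noetherianity. The only point worth flagging is the use of DCC to force the approximating jumping numbers to sit strictly below $c$, since it is precisely this one-sided accumulation that converts the cluster-point hypothesis into the left-sided strict drop needed in the definition.
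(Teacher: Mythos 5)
Your proof is correct, but it is organized as a direct verification of the definition, whereas the paper argues by contradiction in two lines: if $c$ were not a jumping number, then by the openness theorem $c$ would lie in the interior of one of the half-open intervals on which $t \mapsto \JJ(t\vp+\psi)_x$ is constant, so a whole neighbourhood $(c-\ep, c+\ep)$ would contain no jumping numbers, contradicting the cluster-point hypothesis. Your route costs a little more (it invokes DCC and Noetherian stabilization explicitly), but both arguments ultimately rest on the same consequence of Theorem~\ref{openness}, namely the half-open interval structure of the constancy loci. One wording issue in your second step: after ``shrinking $\eta_k$ so that $c_k+\eta_k \le c$,'' monotonicity alone does not preserve the strict inclusion $\JJ((c_k+\eta_k)\vp+\psi)_x \subsetneq \JJ(c_k\vp+\psi)_x$, since decreasing $\eta_k$ makes the left-hand ideal \emph{larger}, not smaller. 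The clean fix is already in your setup: $c_{k+1}$ is a jumping number with $c_k < c_{k+1} < c$, so $\JJ(c\vp+\psi)_x \subseteq \JJ(c_{k+1}\vp+\psi)_x \subsetneq \JJ(c_k\vp+\psi)_x$, the strict inclusion holding because the ideal at the jumping number $c_{k+1}$ is strictly smaller than the ideal at any smaller coefficient such as $c_k$.
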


\begin{proof}
	
 Suppose not. Then there exists $\ep > 0$ such that for $d \in (c-\ep, c+\ep)$, the multiplier ideals $\JJ(d \vp + \psi)$ are constant. This contradicts to $c$ being a cluster point.

 \end{proof}

  When a psh function is toric, we answer Question~\ref{clusters} by the following

 \begin{theorem}\label{mtimes}
 	
 	Let $\vp$ be a toric psh function on the unit polydisk $\DD^n \subset \CC^n$. If $c$ is a  jumping number at the origin $0 \in \DD^n$, then for each integer $m \ge 1$, $m c$ is also a jumping number in $\Jump (\vp)_0$.

 \end{theorem}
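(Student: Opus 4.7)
The plan is to reduce the statement to a scaling property of Newton convex bodies, via Proposition~\ref{jump_newton}. That proposition characterizes $\jump(\vp)_0$ as the closure of the set $C(\vp)$ of positive reals $c$ for which $\partial P(c\vp) \cap \ZZ_{>0}^n \neq \emptyset$, so the whole argument will rest on the scaling identity
\[ P(c\vp) \;=\; c \cdot P(\vp) \qquad (c > 0). \]
I would establish this identity first. It follows from the homogeneity $\widehat{c\vp} = c\,\widehat{\vp}$ of the associated convex function on $\RR^n_{-}$, together with the fact that the construction of $P(\vp)$ from $\widehat{\vp}$ in \cite[\S 3]{R13b} (or equivalently, via the indicator $\psi_\vp$, which satisfies $\psi_{c\vp} = c\,\psi_\vp$) is homogeneous of degree one in $\vp$.

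With this in hand, the case $c \in C(\vp)$ is immediate: choose a lattice point $B \in \partial P(c\vp) \cap \ZZ_{>0}^n$; then $mB \in m\cdot \partial P(c\vp) = \partial\bigl(m P(c\vp)\bigr) = \partial P(mc\,\vp)$, and every coordinate of $mB$ is at least $m \ge 1$, so $mB \in \ZZ_{>0}^n$. Hence $mc \in C(\vp) \subseteq \jump(\vp)_0$ by Proposition~\ref{jump_newton}.

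To finish, I would handle the remaining case $c \in \jump(\vp)_0 \setminus C(\vp)$. Such a $c$ is necessarily a cluster point of $C(\vp)$ (since $\jump(\vp)_0 = \overline{C(\vp)}$), so we may pick a sequence $c_k \in C(\vp)$ with $c_k \to c$. The previous step gives $m c_k \in C(\vp)$ for every $k$, hence $mc = \lim_k m c_k$ lies in the closed set $\overline{C(\vp)} = \jump(\vp)_0$.

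The main obstacle is the clean verification of the scaling identity $P(c\vp) = c \cdot P(\vp)$; once this is in place, both the integer-multiple step and the closure step are essentially one-line arguments. An alternative route, if one wishes to bypass direct manipulation of the indicator, is to verify the identity first on rationals via Proposition~\ref{multi} applied to lattice points in $\interior P(c\vp)$ for suitably chosen $c$, and then pass to all reals by monotonicity of $P(c\vp)$ in $c$; this is more roundabout but uses only facts already stated in the paper.
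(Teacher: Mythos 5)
Your proposal is correct and follows essentially the same route as the paper: reduce via Proposition~\ref{jump_newton} to showing $m\,C(\vp) \subseteq C(\vp)$ using the homogeneity $P(c\vp) = c\,P(\vp)$, and then pass to the closure. The paper leaves the scaling identity and the closure step implicit, whereas you spell them out, but the argument is the same.
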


\noi  We first note that for the individual examples of toric psh functions in Example~\ref{5.10}, Example~\ref{diagonal}, Example~\ref{koike}, the validity of  this theorem can be easily checked.

\begin{proof}
	
	Let $P(c \vp)$ be the Newton convex body of $c \vp$. Assume that $c$ is a jumping number of $\vp$ at $0$. By Proposition~\ref{jump_newton}, it is enough to show that $mC(\vp) \subset C(\vp)$. If $c \in C(\vp)$, we have, since $\partial P( c\vp) \cap \ZZ_{>0}^n \neq \emptyset$,
	$$ \partial P( (mc) \vp) \cap \ZZ_{>0}^n \supseteq m \cdot ( \partial P(c \vp ) \cap \ZZ_{>0}^n ) \neq \emptyset, $$
	which implies that $mc \in C(\vp)$.
\end{proof}

Note that the property in Theorem~\ref{mtimes} describes an intrinsic property satisfied by toric psh functions. This can be used to show that a particular psh function is never toric in any coordinates (as in Example~\ref{saito}). The definition of toric psh as in Section~\ref{toricpsh} depends on the choice of coordinates.  An immediate consequence of Theorem~\ref{mtimes} is the following

\begin{corollary}\label{mcmc}
	
 Let $\vp$ be a toric psh function on $\DD^n$. If $c$ is a cluster point of jumping numbers at $0$, then for each integer $m \ge 1$, $mc$ is also a cluster point of $\Jump (\vp)_0$.
	
\end{corollary}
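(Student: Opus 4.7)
The plan is to deduce the corollary directly from Theorem~\ref{mtimes}. Since $c$ is a cluster point of $\Jump(\vp)_0$, we can select a sequence $(c_k)_{k \ge 1}$ of pairwise distinct jumping numbers of $\vp$ at $0$ with $c_k \to c$. Applying Theorem~\ref{mtimes} to each $c_k$, we see that $mc_k \in \Jump(\vp)_0$ for every $k \ge 1$ and every fixed integer $m \ge 1$. Since $m \ge 1$, the map $x \mapsto mx$ is injective on $\RR$, so the numbers $mc_k$ remain pairwise distinct, and they clearly satisfy $mc_k \to mc$. Therefore $mc$ is a limit of an infinite sequence of distinct elements of $\Jump(\vp)_0$, which is precisely the definition of a cluster point.

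There is essentially no obstacle here: the entire content of the corollary has already been absorbed into Theorem~\ref{mtimes}, whose proof uses the characterization $\Jump(\vp)_0 = \mathrm{cl}(C(\vp))$ from Proposition~\ref{jump_newton} together with the elementary observation that the rescaling $x \mapsto mx$ carries $\partial P(c\vp) \cap \ZZ_{>0}^n$ into $\partial P((mc)\vp) \cap \ZZ_{>0}^n$. One could also, if desired, remark that Proposition~\ref{cluster} ensures $mc$ itself belongs to $\Jump(\vp)_0$, but this is not needed for the statement of the corollary, which concerns only the accumulation property.
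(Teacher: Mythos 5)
Your proof is correct and is exactly the deduction the paper intends: the corollary is stated as an immediate consequence of Theorem~\ref{mtimes}, and your argument (push a sequence of distinct jumping numbers $c_k \to c$ forward under the injective map $x \mapsto mx$) is the standard way to make that immediacy explicit. No gaps.
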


 We remark that the statement of Theorem~\ref{mtimes} does not hold for non-toric psh functions.  Consider the following example of Morihiko Saito  \cite[Example 3.5]{S93} with computation from \cite[Example 6.1]{CL}.  (See also e.g. \cite[\S 2]{ELSV}, \cite{BS05}, \cite{S07}  for more relations between jumping numbers and Bernstein-Sato polynomials.)

 \begin{example}\label{saito}
 	
 	Let $f = x^5 + y^4 + x^3 y^2$ on $\CC^2$. The multiplier ideals $\JJ( c \vp )$ for $\vp = \log{|f|}$ at $(0,0) \in \CC^2$ are given by
 	
 	$$
 	\JJ(c \vp) = {\renewcommand{\arraystretch}{1.5}
 	\left\{
 	\begin{array}{ll} 
 	    \OO_{\CC^2,(0,0)} & \text{if $0 \le c < \frac{9}{20}$,} \\
 	    (x,y) & \text{if $ \frac{9}{20} \le c < \frac{13}{20}$,} \\
 	    (x^2,y) & \text{if $ \frac{13}{20} \le c < \frac{7}{10}$,} \\
 	    (x^2, xy, y^2) & \text{if $ \frac{7}{10} \le c < \frac{17}{20}$,} \\
 	    (x^3, xy, y^2) & \text{if $ \frac{17}{20} \le c < \frac{9}{10}$,} \\
 	    (x^3, x^2y, y^2) & \text{if $ \frac{9}{10} \le c < \frac{19}{20}$,} \\
 	    (x^3, x^2y, xy^2, y^3) & \text{if $ \frac{19}{20} \le c < 1$,} \\
 	\end{array} \right. }
 	$$
 	where all the ideals are in $\OO_{\CC^2,(0,0)}$ and $\JJ(c \vp) = (f) \cdot \JJ((c-1) \vp)$ for $c \ge 1$.
	 \end{example}

 If the statement of Theorem~\ref{mtimes} holds for this $\vp$, then $\frac{27}{20}$ should also be a jumping number of $\vp$, which is contradiction. Thus the statement of Theorem~\ref{mtimes} does not hold for non-toric psh functions, in general.

 \subsection{Cluster points of jumping numbers for toric psh functions in dimension $2$}

 When we restrict the dimension to $n=2$, we can completely determine the cluster points of jumping numbers for toric psh functions.

    Let $\mathrm{pr}_i : \RR^2 \to \RR$ be the projection to the $i$-th coordinate for $i=1,2$. Set $x_0 := \inf{ \mathrm{pr}_1(P(\vp)) }$ and $y_0 := \inf{ \mathrm{pr}_2(P(\vp)) }$. Define $f,g : \RR \to [0,+\infty] $ as follows:
    \begin{align*}
        f(x) &= \left\{ \begin{array}{ll}
        +\infty & \text{if $x < x_0$,} \\
        \inf{ \{ y \in \RR_{\ge 0} : (x,y) \in P(\vp) \} } & \text{if $x \ge x_0$.}
        \end{array} \right.  \\
        g(y) &= \left\{ \begin{array}{ll}
        +\infty & \text{if $y < y_0$,} \\
        \inf{ \{ x \in \RR_{\ge 0} : (x,y) \in P(\vp) \} } & \text{if $y \ge y_0$.}
        \end{array} \right.
    \end{align*}
    Note that the epigraph $\{ (x,t) \in \RR^2 : x \ge x_0, t \ge f(x) \}$ of $f$ is equal to $P(\vp)$,  thus $f$ is a convex function (see \cite[\textsection  2.1]{H}). Similarly, $g$ is also a convex function. From the inclusion $P(\vp) + \RR_{\ge 0}^2 \subseteq P(\vp)$, we conclude that $f$ and $g$ are decreasing and
    $$ \lim_{x \to + \infty}{ f(x) } = y_0 \quad \text{and} \quad \lim_{y \to +\infty}{ g(y) } = x_0. $$
    From this, we have
\begin{lemma}
    Let $\vp$ be a toric psh function on $\DD^2$. Then, for every $\epsilon >0$, the lines $x = x_0 + \epsilon$ and $y = y_0 + \epsilon$ intersect with the interior of $P(\vp)$ but the lines $x = x_0$ and $y = y_0$ do not intersect with the interior of $P(\vp)$. We call the lines $y=y_0$, $x=x_0$ the horizontal asymptote and the vertical asymptote of $P(\vp)$, respectively.
\end{lemma}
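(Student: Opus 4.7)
The plan is to split the assertion into two independent pieces. For the easy direction, I would show that $x = x_0$ and $y = y_0$ miss $\interior P(\vp)$. If $(x_0, y^*) \in \interior P(\vp)$, some open disc around $(x_0, y^*)$ is contained in $P(\vp)$, forcing some point with first coordinate $x_0 - \delta < x_0$ to belong to $P(\vp)$, which contradicts $x_0 = \inf \mathrm{pr}_1(P(\vp))$. The line $y = y_0$ is handled symmetrically using $\mathrm{pr}_2$ and $y_0$.

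For the substantive direction I would use the function $g$ to handle $x = x_0 + \epsilon$ (and the analogous argument with $f$ for $y = y_0 + \epsilon$). Fix $\epsilon > 0$. Since $g$ is decreasing with $\lim_{y \to +\infty} g(y) = x_0$, there exists $y_1 > y_0$ with $g(y_1) < x_0 + \epsilon/2$. The candidate interior point is $p := (x_0 + \epsilon,\, y_1 + 1)$. For $\eta > 0$ small enough, any $(u, v)$ in the open $\eta$-ball around $p$ satisfies $v > y_1$, and then, since $g$ is decreasing, $u > x_0 + \epsilon/2 > g(y_1) \ge g(v)$. Together with closedness of $P(\vp)$ and the cone property $P(\vp) + \RR_{\ge 0}^2 \subseteq P(\vp)$, this yields $(u, v) \in P(\vp)$, so $p \in \interior P(\vp)$, and $p$ lies on the line $x = x_0 + \epsilon$. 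The symmetric argument with $f$ and $x_0$ swapped produces a point on $y = y_0 + \epsilon$ in $\interior P(\vp)$.

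I do not foresee any serious obstacle. The only step requiring attention is the identification of $P(\vp)$ with $\{(x, y) : y \ge y_0,\, x \ge g(y)\}$, so that membership in $P(\vp)$ reduces to checking a single scalar inequality against $g$. This identification is immediate from the definition of $g$ as an infimum, attained because $P(\vp)$ is closed, together with the cone condition $P(\vp) + \RR_{\ge 0}^2 \subseteq P(\vp)$; both facts are already in place in the preparatory paragraphs preceding the lemma. The lemma then collapses to the elementary one-variable observation that a decreasing function approaching a finite limit at $+\infty$ comes arbitrarily close to, but never reaches, that limit.
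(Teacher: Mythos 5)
Your proposal is correct and follows the same route the paper intends: the paper derives the lemma directly from the properties of $f$ and $g$ (monotonicity and the limits $\lim_{x\to+\infty}f(x)=y_0$, $\lim_{y\to+\infty}g(y)=x_0$) together with the epigraph identification, giving no further details, and your argument is simply a careful write-up of exactly that derivation. No gaps.
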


 Recall that $P(\vp)$ is always closed  from its definition. It is easy to see (from $P(\vp) + \RR_{\ge 0}^2 \subseteq P(\vp)$) that the horizontal asymptote $\{ (t, y_0) : t \in \RR\}$ either is disjoint from $P(\vp)$ (as in (f), (h) of Figure 2)  or has  a half-line on it contained in the boundary of  $P(\vp)$ (as in (e), (g) of Figure 2). Similarly for the vertical asymptote as well. The following theorem is Theorem~\ref{dimtwo} in the introduction, which is one of the main results of this paper.

\begin{theorem} \label{cluster_asymptote}
     Assume that $n = 2 $ in the setting of Theorem~\ref{mtimes}. The set of jumping numbers $\Jump (\vp)_0$ has at least one (thus infinitely many, according to Corollary~\ref{mcmc}) cluster point if and only if at least one of the following (1) and (2) holds:
     \begin{enumerate}
         \item $x_0 > 0$ and $\{ (x_0,t) : t \in \RR\} \cap P(\vp) = \emptyset$,
         \item $y_0 > 0$ and $\{ (t, y_0) : t \in \RR\} \cap P(\vp) = \emptyset$.
     \end{enumerate}
     Moreover, the set of cluster points of jumping numbers is precisely equal to
     $$ \left\{ \frac{k}{m} : k \in \ZZ_{>0},\, m \in S \right\} $$
     where $S$ is a subset of $\{x_0, y_0\}$ satisfying the following (a) and (b):
     \begin{enumerate}
         \item[(a)] $x_0 \in S$ if and only if (1) holds,
         \item[(b)] $y_0 \in S$ if and only if (2) holds.
     \end{enumerate}
\end{theorem}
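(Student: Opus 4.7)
The plan is to reduce the theorem to a question about integer points on $\partial P(\vp)$ via Proposition~\ref{jump_newton}. Since $\jump(\vp)_0$ is the closure in $\RR$ of $C(\vp) := \{c > 0 : \partial P(c\vp) \cap \ZZ_{>0}^2 \ne \emptyset\}$, and closures do not introduce new cluster points, I reduce to analyzing cluster points of $C(\vp)$. Using $P(c\vp) = c\, P(\vp)$, an element $c \in C(\vp)$ is witnessed by a lattice point $(p,q) \in \ZZ_{>0}^2$ with $(p/c, q/c) \in \partial P(\vp)$. I decompose $\partial P(\vp)$ into at most three pieces in terms of the boundary function $f$: a vertical half-line $\{x_0\} \times [f(x_0), \infty)$ present iff (1) fails (equivalently $f(x_0) < \infty$), the strictly decreasing portion of the graph of $f$, and a horizontal half-line present iff (2) fails. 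Witnesses on the vertical (resp.\ horizontal) half-line force $c = p/x_0$ (resp.\ $c = q/y_0$), while a witness on the strictly decreasing graph determines $c = p/a$ where $a \in (x_0, \infty)$ is the unique solution of $q = p f(a)/a$.

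For the ``if'' direction and the inclusion of each $k/x_0$ ($k \ge 1$) in the cluster set under (1), I fix $p \in \ZZ_{>0}$ and note that $f(x_0) = +\infty$ under (1). The function $a \mapsto p f(a)/a$ is continuous and strictly decreasing on its range of positivity (a short calculation using convexity and positivity of $f$ together with the strict monotonicity of $1/a$), with limit $+\infty$ as $a \to x_0^+$. By the intermediate value theorem, for every sufficiently large integer $q$ there is a unique $a_q \in (x_0, \infty)$ with $p f(a_q)/a_q = q$, and $a_q \to x_0$. Hence $c_q := p/a_q \in C(\vp)$ are distinct and converge to $p/x_0$, which is therefore a cluster point of $\jump(\vp)_0$. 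The corresponding claim for $q/y_0$ under (2) is symmetric.

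For the ``only if'' direction and the complete description, consider a sequence of distinct $c_n \in C(\vp)$ converging to a finite $c^*$, with witnesses $(p_n, q_n)$. Witnesses from the vertical (resp.\ horizontal) half-line produce $c_n \in \{p/x_0 : p \in \ZZ_{>0}\}$ (resp.\ $\{q/y_0 : q \in \ZZ_{>0}\}$), sets that are discrete with gap $1/x_0$ (resp.\ $1/y_0$), hence cannot cluster at a finite point on their own. Passing to a subsequence I may therefore assume all witnesses lie on the strictly decreasing graph of $f$. If both $p_n$ and $q_n$ are bounded, the witnesses take only finitely many values, contradicting distinctness of the $c_n$. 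If both $p_n, q_n \to \infty$, then $a_n := p_n/c_n \to \infty$ forces $q_n = c_n f(a_n) \to c^* y_0 < \infty$, contradiction. Hence exactly one of $p_n, q_n$ is bounded; if $p_n$ is bounded, then up to a further subsequence $p_n \equiv p$ and $q_n \to \infty$, so $p f(a_n)/a_n \to \infty$ forces $a_n \to x_0^+$ and $f(x_0) = +\infty$, i.e.\ (1) holds, and $c^* = p/x_0$ with $p \in \ZZ_{>0}$. The symmetric case gives $c^* = q/y_0$ under (2), which together recover exactly the claimed set $\{k/m : k \in \ZZ_{>0}, m \in S\}$.

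The principal obstacle I expect is a careful handling of the geometry of $\partial P(\vp)$ across the four combinations of whether (1) and (2) hold or fail, including the degenerate cases $x_0 = 0$ or $y_0 = 0$ in which the corresponding ``asymptote'' is trivial and the associated side contributes no finite cluster point; and verifying that the strict monotonicity of $a \mapsto f(a)/a$ (which requires $f > 0$) is applied only on the portion of the arc relevant to producing integer points $(p,q) \in \ZZ_{>0}^2$. A related technical point is to confirm that the discrete contributions from the half-lines do not spuriously coincide with limits produced by witnesses on the strictly decreasing graph.
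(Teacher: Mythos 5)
Your proposal is correct and follows essentially the same route as the paper: both arguments reduce a cluster point to a sequence of lattice witnesses on $\partial P(c\vp)$ with one coordinate frozen and the other tending to infinity along an asymptote that misses $P(\vp)$, and conversely build such a sequence from condition (1) or (2). The only (cosmetic) difference is that you run the argument through $C(\vp)$ and the boundary function $f$ with an intermediate value theorem step, whereas the paper works directly with inclusions of the interiors $\interior P((c-\epsilon_k)\vp)$ via Proposition~\ref{multi}.
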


\begin{remark1}
    All the possible asymptotic behaviors of the boundary of $P(\vp)$ are listed in Figure 2. Theorem~\ref{cluster_asymptote} exactly says that the jumping numbers of $\vp$ at $0$ has a cluster point if and only if either (d) or (h) holds.
   In other words, there does not exist a cluster point if and only if, for the Newton convex body $P(\vp)$,  one of (a,b,c) holds \emph{and} one of (e,f,g) holds.
\end{remark1}

\begin{remark1}

 One can also easily see that this theorem explicitly generalizes Proposition~\ref{isolated} in this dimension $2$ toric psh case.

\end{remark1}

\bigskip

\begin{figure}[t]
\label{figure2}
\centering
    \includegraphics[width=0.9\textwidth]{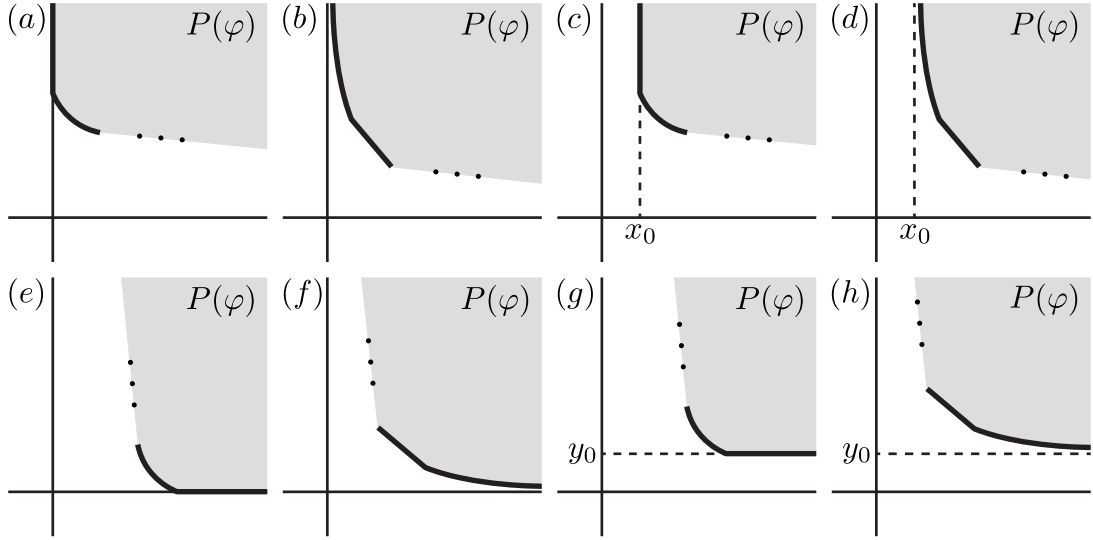}
\caption{All the possible asymptotic behaviors of the boundary of $P(\vp)$ with respect to the two axes.}
\end{figure}

 \begin{proof}[Proof of Theorem~\ref{cluster_asymptote}]
    Assume that $c$ is a cluster point of jumping numbers of $\vp$ at $0$. In view of Proposition~\ref{multi}, we then can find a sequence $(\epsilon_k)$ of positive real numbers and a sequence $(A_k)$ in $\ZZ_{\ge 0}^2$ such that
    \begin{enumerate}
        \item $\epsilon_k > \epsilon_{k+1}$ for every $k \ge 1$ and $\epsilon_k \to 0$,
        \item $A_k + \mathbf{1} \in \interior{P( (c-\epsilon_k) \vp ) }$ but $A_k + \mathbf{1} \notin \interior{P( (c-\epsilon_{k+1}) \vp )}$ for every $k \ge 1$.
    \end{enumerate}
    Note that $A_k + \mathbf{1} \notin \interior{P( c\vp) }$ for every $k \ge 1$ and therefore the first coordinate and the second coordinate of $A_k$ cannot increase simultaneously when $k$ increases. By the second property of $(A_k)$, for each $k \ge 1$, we have either $\mathrm{pr}_1 (A_k) < \mathrm{pr_1}(A_{k+1})$ or $\mathrm{pr}_2(A_k) < \mathrm{pr}_2(A_{k+1})$ but not both. One can find a subsequence $(A_{k_j})$ of $(A_k)$ such that either
    \begin{enumerate}
        \item \label{asymptotic_x} $( \mathrm{pr}_1(A_{k_j}) )$ is a constant sequence and $(\mathrm{pr}_2(A_{k_j}))$ is a strictly increasing sequence, or
        \item \label{asymptotic_y} $(\mathrm{pr}_2(A_{k_j}))$ is a constant sequence and $(\mathrm{pr}_1(A_{k_j}))$ is a strictly increasing sequence.
    \end{enumerate}
    In the case of (\ref{asymptotic_x}), $c x_0 = \mathrm{pr}_1(A_{k_1})+1$ is a positive integer and the line $x=c x_0$ is the vertical asymptote of $P(c\vp)$. Similarly, if $(A_{k_j})$ satisfies (\ref{asymptotic_y}), then $c y_0 = \mathrm{pr}_2(A_{k_1}) + 1$ is a positive integer and the line $y=c y_0$ is the horizontal asymptote of $P(c\vp)$. Note that $x = cx_0$ in the case of (\ref{asymptotic_x}) and $y=cy_0$ in the case of (\ref{asymptotic_y}) do not intersect with $P(c\vp)$.

    Conversely, without loss of generality, we may assume that (1) in the statement holds. We want to show that $\frac{k}{x_0}$ is a cluster point of jumping numbers for every positive integer $k$. Since the line $x = k$ is the vertical asymptote of $P( \frac{k}{x_0} \vp)$ which does not meet $P(\frac{k}{x_0} \vp)$. One can take a sequence $(\epsilon_j)$ of positive real numbers and a sequence $(\alpha_j)$ of positive integers such that
    \begin{enumerate}
        \item $\epsilon_j > \epsilon_{j+1}$ for every $j \ge 1$ and $\epsilon_j \to 0$,
        \item $\alpha_j < \alpha_{j+1}$ for every $j \ge 1$,
        \item $B_j \in \interior P( ( \frac{k}{x_0} - \epsilon_j ) \vp )$ but $B_j \notin \interior P( (\frac{k}{x_0} - \epsilon_{j+1} ) \vp )$ for every $j \ge 1$,
    \end{enumerate}
    where $B_j = ( k , \alpha_j )$. This means that there is a sequence $(\lambda_j)$ of jumping numbers such that
    $$ \frac{k}{x_0} - \epsilon_j < \lambda_j \le \frac{k}{x_0} - \epsilon_{j+1} $$
    for each $j$. Since $\lambda_j$ converges to $\frac{k}{x_0}$, we conclude that $\frac{k}{x_0}$ is a cluster point of jumping numbers. The last assertion follows immediately from the above argument.
\end{proof}

    As a corollary of Theorem~\ref{cluster_asymptote}, we have

\begin{corollary} \label{dim2_cluster}
    In the setting of Theorem~\ref{mtimes} with $n=2$, the set of all the cluster points in $\jump(\vp)_0$ is discrete. In other words, there is no cluster point of cluster points of jumping numbers in this case.
\end{corollary}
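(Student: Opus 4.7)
The plan is to invoke Theorem~\ref{cluster_asymptote} directly, since it already supplies an explicit description of the set of cluster points of $\jump(\vp)_0$. By that theorem, the set of cluster points equals
$$ T := \left\{ \frac{k}{m} : k \in \ZZ_{>0},\, m \in S \right\} $$
where $S \subseteq \{x_0, y_0\}$ has at most two elements. It therefore suffices to verify that $T$ is a discrete (equivalently, locally finite) subset of $\RR$.

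I would split into three cases according to $|S|$. If $S = \emptyset$, then $T = \emptyset$ and the statement is vacuous. If $|S| = 1$, say $S = \{x_0\}$, then $T = \{k/x_0 : k \in \ZZ_{>0}\}$ is an arithmetic progression with positive common difference $1/x_0$, hence trivially discrete in $\RR$; the case $S = \{y_0\}$ is symmetric.

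The only case requiring an extra observation is $|S| = 2$. Here $T$ is the union of the two arithmetic progressions $\{k/x_0\}_{k \ge 1}$ and $\{k/y_0\}_{k \ge 1}$. Each of these progressions meets any bounded interval $I \subset \RR$ in finitely many points (of cardinality $O(|I|)$), and so does their union. Thus every point of $T$ admits a neighborhood in which it is the only element of $T$, i.e., $T$ is discrete, which is exactly the statement of the corollary.

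I do not anticipate any real obstacle, since Theorem~\ref{cluster_asymptote} has already carried out the substantive work of pinning down the cluster points; the remaining step is merely the elementary fact that a finite union of arithmetic progressions with positive common differences is locally finite in $\RR$.
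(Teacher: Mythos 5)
Your proposal is correct and is exactly the argument the paper intends: the corollary is stated as an immediate consequence of Theorem~\ref{cluster_asymptote}, whose explicit description of the cluster points as $\{k/m : k \in \ZZ_{>0},\, m \in S\}$ with $S \subseteq \{x_0,y_0\}$ (and $x_0, y_0 > 0$ whenever they belong to $S$) reduces everything to the local finiteness of a union of at most two arithmetic progressions. No difference in approach worth noting.
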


 Now we can strengthen Theorem~\ref{guanli} of \cite{GL}  as follows.

\begin{corollary}\label{gl1}

 Let $\vp$ be the psh function   \eqref{glpsh} of \cite{GL} in Theorem~\ref{guanli}. The set $T$ of cluster points of $\jump(\vp)_0$ is precisely equal to the set of positive integers $\ZZ_{>0}$.

\end{corollary}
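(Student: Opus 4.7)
The psh function $\vp$ in \eqref{glpsh} is toric on $\DD^2$, so Theorem~\ref{cluster_asymptote} applies: the set $T$ of cluster points of $\jump(\vp)_0$ equals $\{k/m : k \in \ZZ_{>0},\, m \in S\}$ where $S \subseteq \{x_0, y_0\}$ is determined by which of conditions (1), (2) there hold for the Newton convex body $P(\vp)$. The plan is to show $x_0(\vp) = 1$ with the line $x = 1$ disjoint from $P(\vp)$, and $y_0(\vp) = 0$, so that $S = \{1\}$ and $T = \ZZ_{>0}$.

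First I would decompose $\vp = \log|z_1| + \psi$ with $\psi := \sum_{k \ge 1} a_k \vp_k$ and $\vp_k := \log(|z_1| + |z_2/k|^{b_k})$. By Lemma~\ref{Minkowski}, $P(\vp) = P(\log|z_1|) + P(\psi) = P(\psi) + (1, 0)$, reducing the analysis to $P(\psi)$. A direct application of Proposition~\ref{multi} (or a computation of the support function) identifies $P(\vp_k) = \{(x,y) \in \RR_{\ge 0}^2 : x + y/b_k \ge 1\}$, whose support function on $\RR_{\le 0}^2$ is $\max(u, b_k v)$. Passing to the infinite sum gives
\[
\Psi_\psi(u, v) = \sum_{k \ge 1} a_k \max(u,\, b_k v),
\]
and evaluating on each affine piece (splitting at the critical index $k_0 = \lceil \tfrac12 \log_M(|u|/|v|)\rceil$ where $b_k |v|$ crosses $|u|$) shows that $P(\psi)$ has vertices
\[
V_{k_0} = \bigl( M^{-k_0+1}/(M-1),\ (M^{k_0} - M)/(M-1) \bigr), \quad k_0 \ge 1.
\]

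The first vertex $V_1 = (1/(M-1), 0)$ lies on the $x$-axis, so $y_0(\psi) = 0$ is attained in $P(\psi)$; after translation $y_0(\vp) = 0$ and the line $y = 0$ meets $P(\vp)$, so condition (2) fails trivially. The vertices $V_{k_0}$ drift to $(0, +\infty)$ as $k_0 \to \infty$, so $x_0(\psi) = 0$; to show that the line $x = 0$ does \emph{not} meet $P(\psi)$, note that for fixed $v < 0$ and $u \to -\infty$ the formula yields
\[
\Psi_\psi(u, v) \sim -\tfrac{M+1}{M-1}\sqrt{|u|\,|v|} \to -\infty,
\]
so the defining inequality $yv \le \Psi_\psi(u, v)$ (required for all $u \le 0$) admits no finite $y$. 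Translating, $x_0(\vp) = 1 > 0$ and the line $x = 1$ is disjoint from $P(\vp)$, so condition (1) of Theorem~\ref{cluster_asymptote} holds.

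Hence $S = \{x_0(\vp)\} = \{1\}$ and Theorem~\ref{cluster_asymptote} gives $T = \{k/1 : k \in \ZZ_{>0}\} = \ZZ_{>0}$. The main obstacle is the asymptotic analysis of $\Psi_\psi$, which must simultaneously yield the vertex structure of $P(\psi)$ and the divergence $\Psi_\psi(u,v) \to -\infty$ as $u \to -\infty$; the exchange of limit and sum needed to obtain the formula for $\Psi_\psi$ is routine, via the uniform estimate $|\log(e^{tu} + e^{b_k tv}/k^{b_k}) - \max(tu,\, b_k tv - b_k \log k)| \le \log 2$ combined with $\sum_k a_k < \infty$. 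As a cross-check, Theorem~\ref{guanli} of Guan--Li together with Corollary~\ref{mcmc} already gives $\ZZ_{>0} \subseteq T$ directly; the reverse inclusion is precisely the content of the statement $S \subseteq \{1\}$ established above.
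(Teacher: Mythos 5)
Your proposal is correct in outline, but it takes a genuinely different route from the paper. The paper gets the inclusion $\ZZ_{>0}\subseteq T$ for free from Theorem~\ref{guanli} of Guan--Li together with Theorem~\ref{mtimes}/Corollary~\ref{mcmc}, and then only needs two small computations to pin down the reverse inclusion: (i) it exhibits a single explicit point of $P(\vp)$ on the line $y=0$, namely the limit $(\alpha,0)=(1+\tfrac{1}{M-1},0)$ of the gradients $\nabla\widehat{\vp}(0,y)$ as $y\to-\infty$ (using the fact from Guenancia that these gradients lie in $P(\vp)$), which kills condition~(2) of Theorem~\ref{cluster_asymptote}; and (ii) it shows $(2,2)\in\interior P(\vp)$, hence $\JJ(c\vp)=\OO_{\CC^2,0}$ for $c\le\tfrac12$, so the smallest cluster point is $1$, which forces $x_0=1$ and $T=\{k/x_0\}=\ZZ_{>0}$. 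You instead compute the full support function of $P(\psi)$ as the infinite sum $\sum_k a_k\max(u,b_kv)$, extract the vertex structure, and verify directly that $x_0(\vp)=1$ with the asymptote $x=1$ disjoint from $P(\vp)$ and that $y_0(\vp)=0$ is attained. This is more work (in particular the infinite Minkowski-sum/indicator identification, where the limit--sum exchange needs the monotonicity in $t$ of $\max(u,b_kv-b_k\log(k)/t)$, not just the $\log 2$ bound, to be fully airtight), but it buys self-containedness: your argument does not invoke Theorem~\ref{guanli} at all and in effect reproves it, and it nails down $x_0$ without the detour through the log canonical threshold. Your vertex $V_1=(1/(M-1),0)$ recovers exactly the paper's point $(\alpha,0)$ after the translation by $(1,0)$, which is a good consistency check between the two computations.
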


\begin{proof}

 From Theorem~\ref{mtimes} and Theorem~\ref{guanli}, we know that $T \supset \ZZ_{>0} $.
  The other inclusion can be shown by proving that $P(\vp)$ intersects with the line $y=0$ in view of Theorem~\ref{dim2_cluster} and $1$ is the smallest cluster point of jumping numbers. Let $\widehat{\vp}$ be the convex function associated to $\vp$. Then the gradient of $\widehat{\vp}$ at $(x,y)$ is
    $$ \nabla \widehat{\vp} (x,y) = \left( 1 + \sum_{k=1}^{\infty}{ \frac{ M^{-k} e^x}{ e^x + k^{-b_k} e^{b_k y} } }, \sum_{k=1}^{\infty}{ \frac{ M^k k^{-b_k} e^{b_ky}}{ e^x + k^{-b_k} e^{b_k y} } } \right) $$
    and this is contained in $P(\vp)$(see \cite[p.\,1015]{G}). Therefore, we have
    $$ \nabla \widehat{\vp} (0,y) = \left( 1 + \sum_{k=1}^{\infty}{ \frac{ M^{-k} }{ 1 +  k^{-b_k} e^{b_k y} } }, \sum_{k=1}^{\infty}{ \frac{M^k e^{b_k y}}{k^{b_k} + e^{b_k y} } } \right) \in P(\vp) $$
    and observe that $\nabla \widehat{\vp} (0,y)$ converges to a point $(\alpha,0)$, where $\alpha = 1 + \frac{1}{M-1}$, when $y$ tends to $-\infty$. Moreover, since $\alpha \le 2$ for $M \ge 2$ and $P(\vp)$ does not equal to $(\alpha,0) + \RR_{\ge 0}^2$, the point $(\alpha,2) \in \RR^2$ should be in the interior of $P(\vp)$. Therefore the point $(2,2)$ is contained in the interior of $P(\vp)$. We have $(1,1) \in \interior P(\frac{1}{2} \vp)$ and thus $1 \in \JJ (\frac{1}{2} \vp )$. We conclude that $\JJ(c \vp) = \OO_{\CC^2 , 0}$ whenever $0 \le c \le \frac{1}{2}$ and hence $1$ is the smallest cluster point of jumping numbers.
\end{proof}

 It is also easy to see that for this example \eqref{glpsh}, we have the case (e) and then necessarily (d) (since cluster points exist) of Figure 2.

\begin{remark1}
    In Example~\ref{vp+psi}, the vertical asymptote of $P(\vp + \psi)$ is $x=2$ and the horizontal asymptote of $P(\vp+\psi)$ is $y=1$. These asymptotes do not intersect with $P(\vp+\psi)$. By Theorem~\ref{cluster_asymptote}, the set of all cluster points of jumping numbers is $\{ \frac{k}{2} : k \in \ZZ_{>0} \}$. This can be also checked by direct computation using $C(\vp+\psi)$ (see (\ref{jump_newton}) ).
\end{remark1}

\section{Appendix  by S\'ebastien Boucksom}\footnote{S\'ebastien Boucksom :  CNRS–CMLS,
\'Ecole Polytechnique, e-mail: sebastien.boucksom@polytechnique.edu}

  Theorem~\ref{asymptotic} states that given a graded system of ideals $\mfab$,  its asymptotic multiplier ideals are equal to the analytic multiplier ideals of a Siu psh function $\vp = \vp_{\mfab}$ (on $U \subset X$) associated to $\mfab$.

 \begin{proof}[Proof of Theorem~\ref{asymptotic}]

  Before using the valuative characterizations of multiplier ideal sheaves, we will first prove that  $ v (\vp) = v (\mfab) $ for every divisorial valuation $v$ with nonempty center on $U \subset X$.

  We know that $v (\mfab) = \inf_k \frac{1}{k} v(\mfa_k) = \lim_k \frac{1}{k} v(\mfa_k)  $  from \cite[Lemma 2.3]{JM}. Now following \cite[p.258]{BEGZ}, let $\phi_k :=  \frac{1}{k!} \log \abs{\mfa_{k!}} $ (in the notation of \eqref{siu}). Since $v(\varphi)  \le  v (\phi_k) $ for every $k$, it follows that $v(\varphi) \le v(\mfab)$.

  On the other hand, by adding constants, we can arrange that $\vp$ is the increasing limit of $\phi_k$. Then we have  $v(\varphi) \ge \limsup_k v(\phi_k)  = v(\mfab) .$

  Now the theorem will follow from valuative characterization of multiplier ideals of both sides of $\JJ(c \vp) = \JJ(c \cdot \mfab)$, i.e.  \cite[Theorem 5.5]{BFJ} and     \cite[Theorem 4.1 (b)]{BFFU} respectively. The former states that $f \in \JJ(c \vp)$ if and only if there exists $\ep > 0$ such that $ v(f) \ge (1+\ep) v(c \vp) - A (v)  $ for all divisorial valuations with center on $U \subset X$.

  The latter states that $f \in \JJ(c \cdot \mfab)$ if and only if for every normalizing subscheme $N \subset X$ containing $0$ and every $0 < \delta \ll 1$,  $ v(f) \ge c v(\mfab) - A (v) + \delta v(\mathcal I_N) $ for all divisorial valuations with center on $U \subset X$.

  Since $\JJ (c \cdot \mfab) = \JJ (\frac{c}{m} \mfa_m)$ for some $m \ge 1$, we have $\JJ (c \cdot \mfab ) \subset \JJ(c \vp)$.  For the other direction of inclusion, suppose that there exists $\ep > 0$ such that $ v(f) \ge (1+\ep) v(c \vp) - A (v)  $ holds.  Choose $\delta$ sufficiently small so that $\delta v(\mathcal I_N) = \delta < \ep v (c \vp)$.  Then we have $$ v(f) \ge (1+\ep) v(c \vp) - A (v)  >   c v (\mfab) - A (v) + \delta v(\mathcal I_N)   $$  since $ v( c \vp) = c v( \vp) = c v (\mfab)$. This completes the proof.

 \end{proof}

\footnotesize

\bibliographystyle{amsplain}

\qa

\qa

\normalsize

\noi \textsc{Dano Kim}

\noi Department of Mathematical Sciences \& Research Institute of Mathematics

\noi Seoul National University, 08826  Seoul, Korea

\noi e-mail: kimdano@snu.ac.kr

\qa

\noi \textsc{Hoseob Seo}

\noi Department of Mathematical Sciences

\noi Seoul National University, 08826  Seoul, Korea

\noi e-mail: hskoot@snu.ac.kr

\end{document}